\newcommand{\td}{{\rm d}}
\newcommand{\tb}{{\rm b}}
\newcommand{\ttr}{{\rm tr}}
\newtheorem{assum}{Assumption}[section]
\title{Deep Ritz method for the spectral fractional Laplacian equation using the Caffarelli-Silvestre extension}
\author{Yiqi Gu\thanks{Department of Mathematics, The University of Hong Kong, Pokfulam, Hong Kong ({\tt yiqigu@hku.hk, mng@maths.hku.hk}). This work is supported by HKRGC GRF 12300218, 12300519, 17201020 and 17300021.} \and Micheal K. Ng\footnotemark[1]}
\begin{document}

\maketitle

\begin{abstract}
In this paper, we propose a novel method for solving high-dimensional spectral fractional Laplacian equations. Using the Caffarelli-Silvestre extension, the $d$-dimensional spectral fractional equation is reformulated as a regular partial differential equation of dimension $d+1$. We transform the extended equation as a minimal Ritz energy functional problem and search for its minimizer in a special class of deep neural networks. Moreover, based on the approximation property of networks, we establish estimates on the error made by the deep Ritz method. Numerical results are reported to demonstrate the effectiveness of the proposed method for solving fractional Laplacian equations up to ten dimensions. Technically, in this method, we design a special network-based structure to adapt to the singularity and exponential decaying of the true solution. Also, A hybrid integration technique combining Monte Carlo method and sinc quadrature is developed to compute the loss function with higher accuracy.
\end{abstract}

\begin{keywords}
Ritz method; Deep learning; Fractional Laplacian; Caffarelli-Silvestre extension; Singularity;
\end{keywords}
\begin{AMS}
65N15; 65N30; 68T07; 41A25
\end{AMS}

\pagestyle{myheadings}
\thispagestyle{plain}
\markboth{}{}

\section{Introduction}\label{Sec_Introduction}
As a nonlocal generalization of the Laplacian $-\Delta$, the
spectral fractional Laplacian $(-\Delta)^s$ with a fraction
$s$ arises in many areas of applications, such as anomalous diffusion \cite{Carreras2001,Uchaikin2015}, turbulent flows \cite{Shlesinger1987}, L\'{e}vy processes \cite{Jourdain2005}, quantum mechanics \cite{Laskin2000}, finance \cite{Tankov2003,Cont2005} and pollutant transport \cite{Vazquez2014}. In this paper, we develop a network-based Ritz method for solving fractional Laplacian equations using the Caffarelli-Silvestre extension.

Let $d\in\mathbb{N}^+$ be the dimension of the problem and $\Omega$ be a bounded Lipschitz domain in $\mathbb{R}^d$. Also, suppose $0<s<1$ and $f$ is a function defined in $\Omega$, we consider the following spectral fractional Laplacian equation with homogeneous Dirichlet condition,
\begin{equation}\label{01}
\begin{cases}
(-\Delta)^s U(x) = f(x),\quad\forall x\in\Omega,\\
U(x) = 0,\quad \forall x\in\partial\Omega,
\end{cases}
\end{equation}
where $(-\Delta)^s$ is defined by the spectral decomposition of $-\Delta$ with the same boundary conditions. More precisely, we suppose the countable set $\{(\lambda_n,\phi_n)\}_{n=1}^\infty$ are all the eigenvalues and orthonormal eigenfunctions of the following problem,
\begin{equation}\label{26}
\begin{cases}
-\Delta \phi = \lambda \phi,\quad\text{in}~\Omega,\\
\phi = 0,\quad\text{on}~\partial\Omega,\\
(\phi,\phi)=1,
\end{cases}
\end{equation}
where $(\cdot,\cdot)$ is the standard inner product in $L^2(\Omega)$. Then for any $U\in L^2(\Omega)$,
\begin{equation}
(-\Delta)^s U=\sum_{n=1}^{\infty}\tilde{U}_n\lambda_n^s\phi_n,~\text{with}~\tilde{U}_n:=(U,\phi_n).
\end{equation}

We remark that another definition of the fractional Laplacian is formulated by integrals with non-local structures, and these two definitions do not coincide. It is difficult to solve fractional Laplacian equations of either definition directly using numerical methods for regular differential equations (e.g., finite difference method or finite element method) due to the non-local property of the fractional operator \cite{Bonito2019_2,Acosta2017}. Instead, one effective approach is to use the Caffarelli-Silvestre extension \cite{Caffarelli2007}. Specifically, let us introduce a scalar variable $y$ and consider the following $d+1$-dimensional problem
\begin{equation}\label{02}
\begin{cases}
\nabla\cdot\left(y^\alpha \nabla u(x,y)\right) = 0,\quad\forall (x,y)\in\mathcal{D}:=\Omega\times(0,\infty),\\
-\underset{y\rightarrow0}{\lim}~y^\alpha u_y(x,y)=d_sf(x),\quad\forall x\in\Omega,\\
u(x,y) = 0,\quad \forall (x,y)\in\partial_L\mathcal{D}:=\partial\Omega\times[0,\infty),
\end{cases}
\end{equation}
where $\alpha=1-2s$ and $d_s=2^{1-2s}\Gamma(1-s)/\Gamma(s)$. Suppose $u(x,y)$ solves \eqref{02}, then $U(x):=u(x,0)$ is a solution of \eqref{01} \cite{Nochetto2015}. Consequently, one can solve the extended problem \eqref{02} with regular derivatives to avoid addressing spectral fractional differential operators, with the extra cost that (i) the dimension is increased from $d$ to $d+1$; (ii) the domain is extended from the bounded one $\Omega$ to an unbounded cylinder $\partial_L\mathcal{D}$. Several methods have been proposed for \eqref{02}, such as the tensor product finite elements \cite{Nochetto2015} and the enriched spectral method using Laguerre functions \cite{Chen2020}. We remark that the Caffarelli-Silvestre extension is exclusively for the spectral fractional Laplacian and not for the integral fractional Laplacian. Moreover, the extension technique can be extended to more general fractional symmetric elliptic operators of the form $-\nabla\cdot(A(x)\nabla U(x))+C(x)U(x)$ with $A(x)$ being symmetric and positive definite and $C(x)$ being positive.

However, conventional linear structures such as finite elements and polynomials are usually incapable of high-dimensional approximation in practice. For example, suppose a tensor product linear structure has $\tilde{N}$ basis functions in each dimension, then the total degree of freedom is $O(\tilde{N}^d)$, which is a huge number if $d$ is moderately large. Such a curse of dimensionality prevents one from using linear algebra structures in high-dimensional problems with $d>3$. For the spectral fractional Laplacian, most existing methods based on the Caffarelli-Silvestre extension could solve numerical examples of dimension at most two, mainly due to the limitation of storage. Our primary target is to solve many physically relevant problems that appear in three or higher dimensional situations.

In recent years, deep neural networks (DNNs) are widely studied and utilized in applied problems. As a composition of simple neural functions, the DNN has parameters nonlinearly arrayed in the network structure. For a fully-connected DNN with depth $L$, width $M$ and dimension of inputs $d$, the total number of parameters is of $O(LM^2+Md)$. Therefore, the degree of freedom increases linearly with the dimension and DNNs are capable of dealing with high-dimensional approximation in practice. Theoretically, it is shown that DNNs have decent approximation properties for particular function spaces (e.g., Barron space). The seminal work of Barron \cite{Barron1992,Barron1993} deduces $L^2$-norm
and $L^{\infty}$-norm approximations of two-layer sigmoid networks. Recent work \cite{Klusowski2018,E2019,E2020,E2020_2,Siegel2020_1,Siegel2020_2,Caragea2020,Lu2021} considers more variants of the network-based approximation for Barron-type spaces. Generally, given a Barron function $g:\Omega\rightarrow\mathbb{R}$, there exists a two-layer neural network $g_M$ with width $M$ and common-used activations such that
\begin{equation}\label{11}
\|g-g_M\|_\Omega\leq O\left(\frac{\|g\|_\mathcal{B}}{\sqrt{M}}\right),
\end{equation}
where $\|g\|_\mathcal{B}$ is the Barron norm of $g$, and $\|\cdot\|_\Omega$ can be $L^2(\Omega)$, $H^1(\Omega)$ or $L^\infty(\Omega)$ norm under different hypothesis.
It is worth mentioning that the above error bound is independent of the dimension of the input variable; hence the network-based approximation can overcome the curse of dimensionality.

In this paper, we solve \eqref{02} by Ritz method in which DNNs are employed to approximate the solution. More precisely, we reformulate \eqref{02} as a minimal Ritz energy functional problem and characterize the Sobolev space of the weak solution. Next, as a subset of the solution space, a class of DNNs is taken as the hypothesis space of the minimization. We design a special network structure for the DNN class such that (i) it satisfies the homogeneous boundary condition on
$\partial_L\mathcal{D}$ in \eqref{02}; (ii) it decays to zero exponentially as $y\rightarrow\infty$; and (iii)
it has a singularity behaves as $O(y^{k+1-\alpha})$ for integers $k$ at $y=0$. Note that the second and third properties mentioned above are also satisfied by the true solution. Consequently, the special DNNs have better approximation performance than generic DNNs, which is also observed in our numerical experiments.

Theoretically, under a Barron-type framework, we investigate the approximation error between the special DNN class and the solution space under the Sobolev norm, which has a similar form to \eqref{11}. Based on that, we estimate the solution error of the proposed Ritz method using the special DNN class, assuming that the true solution has components in the Barron space. The final solution error is of $O(M^{-1/2})$, which is consistent with the approximation error. We remark that the error bound of our method is advantageous over the existing methods \cite{Nochetto2015,Chen2020} using finite element or Laguerre functions if the dimension is moderately large, since the error order $-1/2$ is independent of the dimension. Also, the error order is consistent with the deep Ritz method for regular Laplacian equations \cite{Muller2021}

In the implementation, a combination of stochastic and deterministic methods is employed to compute the integrals in the energy functional. Specifically, we utilize the quasi-Monte Carlo method and the sinc quadrature rule \cite{Lund1992} to evaluate the integrals in terms of $x$ and $y$, respectively. For the former, due to the potentially high dimension of $x$, Monte Carlo-type methods are effective and easy to implement. For the latter, although the integrand in terms of $y$ is one-dimensional, it has a singular term $y^\alpha$ when $\alpha\neq0$. While sinc quadrature is highly accurate for integrals with fractional powers and therefore preferred here. By numerical experiments, we demonstrate that our method can solve model problems up to $d=10$ with desired accuracy. To the best of our knowledge, this is the first attempt to solve high-dimensional fractional Laplacian equations by deep learning methods.

Overall, the highlights of our work can be summarized as follows:
\begin{itemize}
  \item Development of a special approximation structure based on generic DNNs according to the special properties of the true solution;
  \item Combination of stochastic Monte Carlo method for high dimensions and deterministic sinc quadrature for high accuracy in the learning process;
  \item Simulation of 10-D fractional Laplacian equations with relative error $O(10^{-2})$.
\end{itemize}

The rest of the paper is organized as follows. In Section 2, we reformulate the problem \eqref{02} as the minimization of an energy functional and show their equivalence. In Section 3, the fully connected neural networks are introduced. We characterize the special structures of the hypothesis space and discuss its approximation property. In Section 4, we derive the error estimate for the proposed method. Numerical experiments are presented to show the effectiveness of our method in Section 5. Finally, some
concluding remarks are given in Section 6.

\section{Minimization of Energy Functional}
We solve the regular partial differential equation \eqref{02} under the framework of Ritz method. The equation can be transformed to an equivalent minimal functional, and we look for Sobolev weak solutions instead of classical solutions. Similarly, one can also solve \eqref{02} using Galerkin method by introducing appropriate test spaces such as in \cite{Nochetto2015,Chen2020}. Since learning-based methods aim to find solutions via optimization, the use of Ritz method can be achieved
for building such formulation.

\subsection{The space of weak solutions}
Let $\mathcal{Z}$ be any region and $\omega$ be a positive weight function. We define the weighted $L^2$ space as
\begin{equation}
L^2_\omega(\mathcal{Z}):=\left\{v~\Big|~\int_\mathcal{Z}|v(z)|^2\omega(z)\td z<\infty\right\},
\end{equation}
equipped with the inner product
\begin{equation}
\left(v_1,v_2\right)_{\omega,\mathcal{Z}}:=\int_\mathcal{Z}v_1(z)v_2(z)\omega(z)\td z,\quad\forall v_1,v_2\in L^2_\omega(\mathcal{Z}),
\end{equation}
and the induced norm
\begin{equation}
\|v\|_{\omega,\mathcal{Z}}:=\left(v,v\right)_{\omega,\mathcal{Z}}^\frac{1}{2}.
\end{equation}
The weight $\omega$ is omitted from the notations if $\omega\equiv 1$.

We also define the weighted Sobolev space as
\begin{equation}
H^1_\omega(\mathcal{Z}):=\left\{v~|~v\in L^2_\omega(\mathcal{Z}),\nabla v\in L^2_\omega(\mathcal{Z})\right\},
\end{equation}
equipped with the norm
\begin{equation}
\|v\|_{1,\omega,\mathcal{Z}}:=\left(\|v\|_{\omega,\mathcal{Z}}^2+\|\nabla v\|_{\omega,\mathcal{Z}}^2\right)^\frac{1}{2}.
\end{equation}

It is shown in \cite{Miller2001} the solution of the extended problem \eqref{02} has a desirable property that it converges exponentially to zero as $y\rightarrow\infty$. Therefore we can define the solution space as
\begin{equation}
H^{1,\tb}_{y^\alpha}(\mathcal{D}):=\left\{v\in H^1_{y^\alpha}(\mathcal{D})~\Big|~\underset{y\rightarrow\infty}{\lim}v(x,y)=0,~v(x,y)|_{\partial_L\mathcal{D}}=0\right\},
\end{equation}
with the norm
\begin{equation}
\|v\|_{H^{1,\tb}_{y^\alpha}(\mathcal{D})}=\|\nabla v\|_{y^\alpha,\mathcal{D}}.
\end{equation}

Denote the trace for all $v\in H^{1,\tb}_{y^\alpha}(\mathcal{D})$ by
\begin{equation}\label{25}
\ttr\{v\}(x)=v(x,0).
\end{equation}

Moreover, for column vectors or vector-valued functions, we use $|\cdot|$ to denote their Euclidean norm.

\subsection{Minimal energy functional}
We aim to characterize the solution of \eqref{02} as a minimizer of an corresponding energy functional. For this, we define the functional
\begin{equation}
\mathcal{I}[w]:=\frac{1}{2}(\nabla w,\nabla w)_{y^\alpha,\mathcal{D}}-d_s\left(f,\ttr\{w\}\right)_\Omega,\quad\forall w\in H^{1,\tb}_{y^\alpha}(\mathcal{D}).
\end{equation}

We have the following result.
\begin{theorem}\label{Thm01}
Assume $u\in H^{1,\tb}_{y^\alpha}(\mathcal{D})$ solves \eqref{02}. Then
\begin{equation}\label{03}
\mathcal{I}[u]=\underset{w\in H^{1,\tb}_{y^\alpha}(\mathcal{D})}{\min}\mathcal{I}[w].
\end{equation}
Conversely, if $u$ satisfies \eqref{03}, then $u$ solves the problem \eqref{02}.
\end{theorem}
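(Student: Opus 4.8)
The plan is to establish the equivalence between the minimization problem \eqref{03} and the weak formulation of \eqref{02}, and this proceeds via the standard variational argument adapted to the weighted Sobolev setting. First I would write down the weak formulation of \eqref{02}: multiplying the equation $\nabla\cdot(y^\alpha\nabla u)=0$ by a test function $v\in H^{1,\tb}_{y^\alpha}(\mathcal{D})$, integrating over $\mathcal{D}$, and integrating by parts, the Neumann-type boundary condition $-\lim_{y\to0}y^\alpha u_y=d_sf$ on $\Omega$ and the homogeneous lateral/decay conditions built into the space produce the boundary term $d_s(f,\ttr\{v\})_\Omega$. This yields the claim that $u$ solves \eqref{02} (in the weak sense) if and only if
\begin{equation}\label{weakform}
(\nabla u,\nabla v)_{y^\alpha,\mathcal{D}}=d_s(f,\ttr\{v\})_\Omega\quad\text{for all }v\in H^{1,\tb}_{y^\alpha}(\mathcal{D}).
\end{equation}
I would note that the boundary contributions from $y\to\infty$ and from $\partial_L\mathcal{D}$ vanish because of the defining conditions of $H^{1,\tb}_{y^\alpha}(\mathcal{D})$.

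For the forward direction, assume $u$ solves \eqref{02}, hence \eqref{weakform} holds. For any $w\in H^{1,\tb}_{y^\alpha}(\mathcal{D})$ write $w=u+v$ with $v=w-u\in H^{1,\tb}_{y^\alpha}(\mathcal{D})$, and expand
\begin{equation}
\mathcal{I}[w]=\mathcal{I}[u+v]=\frac12(\nabla u,\nabla u)_{y^\alpha,\mathcal{D}}+(\nabla u,\nabla v)_{y^\alpha,\mathcal{D}}+\frac12(\nabla v,\nabla v)_{y^\alpha,\mathcal{D}}-d_s(f,\ttr\{u\})_\Omega-d_s(f,\ttr\{v\})_\Omega.
\end{equation}
Using \eqref{weakform}, the two cross terms $(\nabla u,\nabla v)_{y^\alpha,\mathcal{D}}$ and $-d_s(f,\ttr\{v\})_\Omega$ cancel, leaving $\mathcal{I}[w]=\mathcal{I}[u]+\frac12\|\nabla v\|_{y^\alpha,\mathcal{D}}^2\ge\mathcal{I}[u]$, which proves \eqref{03}. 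For the converse, suppose $u$ minimizes $\mathcal{I}$. Fix any $v\in H^{1,\tb}_{y^\alpha}(\mathcal{D})$ and $t\in\mathbb{R}$; since $u+tv$ is admissible, the scalar function $\varphi(t):=\mathcal{I}[u+tv]=\mathcal{I}[u]+t\big[(\nabla u,\nabla v)_{y^\alpha,\mathcal{D}}-d_s(f,\ttr\{v\})_\Omega\big]+\tfrac{t^2}{2}\|\nabla v\|_{y^\alpha,\mathcal{D}}^2$ has a minimum at $t=0$, so $\varphi'(0)=0$ gives exactly \eqref{weakform}; by the equivalence established in the first step, $u$ solves \eqref{02}.

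The main obstacle is the first step — justifying the integration by parts and the trace identification in the weighted space $H^{1,\tb}_{y^\alpha}(\mathcal{D})$ with the degenerate/singular weight $y^\alpha$, $\alpha=1-2s\in(-1,1)$, on the unbounded cylinder. One must verify that the trace operator $\ttr$ in \eqref{25} is well-defined and continuous from $H^{1,\tb}_{y^\alpha}(\mathcal{D})$ into an appropriate fractional space on $\Omega$ (so that $(f,\ttr\{w\})_\Omega$ makes sense and $\mathcal{I}$ is finite), and that the flux $\lim_{y\to0}y^\alpha u_y$ exists in the appropriate dual sense and matches the boundary datum; these are exactly the technical facts supplied by the Caffarelli–Silvestre theory and the references \cite{Caffarelli2007,Nochetto2015,Miller2001} cited earlier, so I would invoke them rather than reprove them. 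Once the weak formulation \eqref{weakform} is legitimately on the table, the remainder is the routine convexity argument sketched above; note that no existence/uniqueness claim is needed here, as the theorem only asserts the equivalence of the two characterizations.
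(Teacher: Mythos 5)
Your proposal is correct and follows essentially the same route as the paper: both directions rest on the weak formulation $(\nabla u,\nabla v)_{y^\alpha,\mathcal{D}}=d_s(f,\ttr\{v\})_\Omega$, with the forward direction obtained by expanding the quadratic functional (the paper phrases the final step via $\nabla u\cdot\nabla w\le\tfrac12|\nabla u|^2+\tfrac12|\nabla w|^2$ rather than your exact identity $\mathcal{I}[w]=\mathcal{I}[u]+\tfrac12\|\nabla(w-u)\|_{y^\alpha,\mathcal{D}}^2$, a cosmetic difference) and the converse via $\varphi'(0)=0$ for $\varphi(t)=\mathcal{I}[u+tv]$. The only substantive omission is that the paper, after reaching the weak form in the converse direction, explicitly localizes with $C_c^\infty(\mathcal{D})$ and $C_c^\infty(\Omega)$ test functions to recover the equation $\nabla\cdot(y^\alpha\nabla u)=0$ and the flux condition separately, a step you subsume into the asserted equivalence of weak and strong formulations.
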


\begin{proof}
To prove \eqref{03}, for all $w\in H^{1,\tb}_{y^\alpha}(\mathcal{D})$, $u-w\in H^{1,\tb}_{y^\alpha}(\mathcal{D})$. Then using the fact that $(u-w)|_{\partial_L\mathcal{D}}=(u-w)|_{\Omega\times\{+\infty\}}=0$ and integration by parts we have
\begin{equation}\label{32}
\left(\nabla\cdot\left(y^\alpha \nabla u\right),u-w\right)_{\mathcal{D}}
=-\left(y^\alpha \nabla u,\nabla(u-w)\right)_{\mathcal{D}}+\left(-y^\alpha u_y,u-w\right)_{\Omega\times\{0\}}.
\end{equation}
Note the left hand side is equal to zero since $\nabla\cdot\left(y^\alpha \nabla u\right)\equiv0$. And the second term on the left is
\begin{multline}
\left(-y^\alpha u_y,u-w\right)_{\Omega\times\{0\}}=\left(-y^\alpha u_y|_{y=0},u(x,0)-w(x,0)\right)_{\Omega}\\
=\left(\lim_{y\rightarrow0}(-y^\alpha u_y),u(x,0)-w(x,0)\right)_{\Omega}=\left(d_sf(x),\text{tr}\{u\}-\text{tr}\{w\}\right)_{\Omega}.
\end{multline}
Therefore, it follows \eqref{32}

\begin{equation}
\left(y^\alpha \nabla u,\nabla(u-w)\right)_{\mathcal{D}}=\left(d_sf(x),\ttr\{u\}-\ttr\{w\}\right)_\Omega,
\end{equation}
which implies
\begin{equation}
\left(\nabla u,\nabla u\right)_{y^\alpha,\mathcal{D}}-d_s\left(f,\ttr\{u\}\right)_\Omega=\left(\nabla u,\nabla w\right)_{y^\alpha,\mathcal{D}}-d_s\left(f,\ttr\{w\}\right)_\Omega.
\end{equation}
Using the inequality $\nabla u\cdot\nabla w\leq\frac{1}{2}|\nabla u|^2+\frac{1}{2}|\nabla w|^2$, it leads to
\begin{equation}
\mathcal{I}[u]\leq\mathcal{I}[w].
\end{equation}

On the other hand, suppose \eqref{03} holds. Fix any $w\in H^{1,\tb}_{y^\alpha}(\mathcal{D})$ and write
\begin{equation}
i(\tau):=\mathcal{I}[u+\tau w],\quad \forall\tau\in\mathbb{R}.
\end{equation}
Note
\begin{multline}
i(\tau)=\frac{1}{2}\left(\nabla(u+\tau w),\nabla(u+\tau w)\right)_{y^\alpha,\mathcal{D}}-d_s\left(f,\ttr\{u+\tau w\}\right)_\Omega\\
=\frac{1}{2}\left(\nabla u,\nabla u\right)_{y^\alpha,\mathcal{D}}-d_s\left(f,\ttr\{u\}\right)_\Omega+\tau\left(\left(\nabla u,\nabla w\right)_{y^\alpha,\mathcal{D}}- d_s\left(f,\ttr\{w\}\right)_\Omega\right)+\frac{1}{2}\tau^2\left(\nabla w,\nabla w\right)_{y^\alpha,\mathcal{D}}.
\end{multline}
Since $u+\tau w\in H^{1,\tb}_{y^\alpha}(\mathcal{D})$ for each $\tau$, $i(\tau)$ takes its minimum at $\tau=0$, and thus
\begin{equation}
0=i'(0)=\left(\nabla u,\nabla w\right)_{y^\alpha,\mathcal{D}}- d_s\left(f,\ttr\{w\}\right)_\Omega.
\end{equation}
Using integration by parts we have
\begin{equation}\label{04}
\left(\nabla\cdot\left(y^\alpha \nabla u\right),w\right)_{\mathcal{D}}+\left(d_sf+\underset{y\rightarrow0}{\lim}~y^\alpha u_y,\ttr\{w\}\right)_\Omega=0.
\end{equation}
Especially, \eqref{04} holds for all $w\in C^\infty_c(\mathcal{D})$, which implies
\begin{equation}
\int_{\mathcal{D}}\left(\nabla\cdot\left(y^\alpha \nabla u\right)\right)w=0,\quad\forall w\in C^\infty_c(\mathcal{D}),
\end{equation}
leading to $\nabla\cdot\left(y^\alpha \nabla u\right)=0$ in $\mathcal{D}$. And thus by \eqref{04}
\begin{equation}
\left(d_sf+\underset{y\rightarrow0}{\lim}~y^\alpha u_y,\ttr\{w\}\right)_\Omega=0.
\end{equation}
Especially, $\ttr\{w\}$ takes over all functions in $C^\infty_c(\Omega)$, which leads to $d_sf+\underset{y\rightarrow0}{\lim}~y^\alpha u_y=0$ in $\Omega$.
\end{proof}

By virtue of Theorem \ref{Thm01}, it suffices to solve the following optimization
\begin{equation}\label{13}
\underset{w\in H^{1,\tb}_{y^\alpha}(\mathcal{D})}{\min}\mathcal{I}[w],
\end{equation}
whose solution is exactly a weak solution of the extended problem \eqref{02}.

\section{Neural Network Approximation}
In the numerical computation, one aims to introduce a function set with a finite degree of freedom to approximate the solution space $H^{1,\tb}_{y^\alpha}(\mathcal{D})$, and minimize $\mathcal{I}[\cdot]$ in this appropriate set of functions. In many physical relevant problems, it is required to address $d\geq3$, causing the dimension of $\mathcal{D}$ no less than 4. Potentially high dimensions impede the usage of conventional linear structures. However, as a nonlinear structure, DNNs can approximate high-dimensional functions by moderately less degree of freedom. This inspires us to use classes of DNNs to approximate $H^{1,\tb}_{y^\alpha}(\mathcal{D})$ especially when $d$ is large.

\subsection{Fully connected neural network}\label{Sec_FNN}
In our method, we employ the fully connected neural network (FNN) which is one of the most common neural networks in deep learning. Mathematically speaking, let $\sigma(t)$ be an activation function which is applied entry-wise to a vector $x$ to obtain another vector of the same size. Let $L\in\mathbb{N}^+$ and $M_\ell\in\mathbb{N}^+$ for $\ell=1,\dots,L$, an FNN $\hat{\phi}$ is the composition of $L$ simple nonlinear functions as follows
\begin{equation}
\hat{\phi}(z;\theta):=a^T h_{L} \circ h_{L-1} \circ \cdots \circ h_{1}(z)\quad \text{for } z\in\mathbb{R}^d,
\end{equation}
where $a\in \mathbb{R}^{M_L}$; $h_{\ell}(z_{\ell}):=\sigma\left(W_\ell z_{\ell} + b_\ell \right)$ with $W_\ell \in \mathbb{R}^{M_{\ell}\times M_{\ell-1}}$ ($M_0:=d$) and $b_\ell \in \mathbb{R}^{M_\ell}$ for $\ell=1,\dots,L$. Here $M_\ell$ is called the width of the $\ell$-th layer and $L$ is called the depth of the FNN. $\theta:=\{a,\,W_\ell,\,b_\ell:1\leq \ell\leq L\}$ is the set of all parameters in $\hat{\phi}$ to determine the underlying neural network. Common types of activation functions include the sigmoid function $(1+e^{-t})^{-1}$ and the rectified linear unit (ReLU) $\max(0,t)$. We remark that, when solving $k$-th order differential equations, many existing network-based methods use the ReLU$^{k+1}$ activation function $\max\{0,t^{k+1}\}$, so that their networks are $C^k$ functions and can be applied by the differential operators. While in the minimization \eqref{13}, only $H^1$ regularity is required and therefore ReLU networks suffice.

Denote $M:=\max~\{M_\ell,~1\leq\ell\leq L\}$, then it is clear that $|\theta|=O(M^2L+Md)$. Comparatively, the degree of freedom of linear structures such as finite elements and tensor product polynomials increases exponentially with $d$. Hence FNNs are more practicable in high-dimensional approximations. For simplicity, we consider the architecture $M=M_\ell$ for all $\ell$ and denote $\mathcal{F}_{L,M,\sigma}$ as the set consisting of all FNNs with depth $L$, width $M$ and activation function $\sigma$. In the following passage, all functions involving an FNN will be denoted with the superscript $\hat{}$.

\subsection{Special structures of the approximate class}
Recent work \cite{Lu2020,Shen2021} indicates that deep FNNs can approximate smooth functions in $\infty$-norm within any desired accuracy as long as the depth or width is large enough. The approximation will be more accurate if the target function has higher regularity. However, it is shown in \cite{Capella2011,Chen2020} that the solution of \eqref{02} has a singularity at $y=0$ which behaves as $y^{k+1-\alpha}$ for $k\in\mathbb{N}^+$. Therefore, it is not appropriate to naively use the class of generic FNNs. Instead, We aim to develop a special structure based on FNNs for the approximate class.

In the enriched spectral method \cite{Chen2020}, the solution of \eqref{04} is approximated by a structure consisting of two parts. One part is a linear combination of smooth basis functions, which approximates the regular component of the solution. The other part is a linear tensor product combination of smooth basis functions and a sequence of artificial terms $\{y^{k-\alpha}\}_{k=1,2,\cdots}$, which is for the singular component of the solution. Following this idea, we use $\hat{\phi}(x,y)$ to denote any function in the approximate class, and build its structure as the combination of two parts,
\begin{equation}\label{29}
\hat{\phi}(x,y)=\hat{\phi}_1(x,y)+y^{1-\alpha}\hat{\phi}_2(x,y),
\end{equation}
where $\hat{\phi}_1$, $\hat{\phi}_2$ are FNN-based smooth functions and the term $y^{1-\alpha}$ is introduced to adapt to the singularity at $y=0$.

Moreover, since the true solution of the extended problem \eqref{02} converges to zero as $y\rightarrow\infty$, the functions in the approximate class should also preserve this property. To realize it, we can introduce exponential terms concerning $y$ in the structure of $\hat{\phi}(x,y)$. Specifically, we let
\begin{equation}
\hat{\phi}_1(x,y)=\hat{\phi}_3(x,y)e^{-\gamma'y},\quad\hat{\phi}_2(x,y)=\hat{\phi}_4(x,y)e^{-\gamma''y},
\end{equation}
where $\{\hat{\phi}_3,\hat{\phi}_4\}$ are FNN-based functions and $\{\gamma',\gamma''\}>0$ are two auxiliary scalar parameters ensuring that $\hat{\phi}_1$ and $\hat{\phi}_2$ converges to zero as $y\rightarrow\infty$ exponentially.

In the end, as a subset of $H^{1,\tb}_{y^\alpha}(\mathcal{D})$, the approximate class should also consist of functions satisfying the boundary condition; namely, $\hat{\phi}|_{\partial_L\mathcal{D}}=0$. We achieve this by setting
\begin{equation}
\hat{\phi}_3(x,y)=\hat{\phi}'(x,y)h(x),\quad\hat{\phi}_4(x,y)=\hat{\phi}''(x,y)h(x),
\end{equation}
where $\{\hat{\phi}',\hat{\phi}''\}$ are generic FNNs and $h(x)$ is a smooth function constructed particularly to satisfy $h(x)=0$ on $\partial\Omega$. For example, if $\Omega$ is a hypercube $(a_1,b_1)\times\cdots\times(a_d,b_d)$, then $h(x)$ can be chosen as $h(x)=\prod_{i=1}^d(x_i-a_i)(x_i-b_i)$; if $\Omega$ has a boundary characterized by a level set, say $\partial\Omega=\{x~|~\rho(x)=0\}$, for some continuous function $\rho$, then $h(x)$ can be chosen as $h(x)=\rho(x)$. Generally, we can set $h(x)=F(\text{dist}(x,\partial\Omega))$, where $F$ is a particular analytic function satisfying $F(0)=0$ and $\text{dist}(x,\partial\Omega)$ represents the distant between $x$ and $\partial\Omega$. In practice, we expect to set $h(x)$ as smooth as possible so that $\hat{\phi}_3$ and $\hat{\phi}_4$ preserve the same regularity as $\hat{\phi}'$ and $\hat{\phi}''$, respectively.

To sum up, we build the following special structure for the approximate class,
\begin{equation}\label{05}
\hat{\phi}((x,y);\theta)=\hat{\phi}'((x,y);\theta')h(x)e^{-\gamma'y}+y^{1-\alpha}\hat{\phi}''((x,y);\theta'')h(x)e^{-\gamma''y},
\end{equation}
where $\theta:=\{\theta',\theta'',\gamma',\gamma''\}$ is the set of all trainable parameters. We denote $\mathcal{N}_{L,M,\sigma,h}$ as the class of all neural networks having the structure in \eqref{05}, namely,
\begin{multline}
\mathcal{N}_{L,M,\sigma,h}=\Big\{\hat{\phi}:\mathcal{D}\rightarrow\mathbb{R}~\Big|~\hat{\phi}(x,y)=\hat{\phi}'(x,y)h(x)e^{-\gamma'y}+y^{1-\alpha}\hat{\phi}''(x,y)h(x)e^{-\gamma''y},\\
\forall\hat{\phi}',\hat{\phi}''\in\mathcal{F}_{L,M,\sigma},~\forall\gamma',\gamma''\in\mathbb{R}^+\Big\}.
\end{multline}

It is clear that $\mathcal{N}_{L,M,\sigma,h}$ is a subset of $H^{1,\tb}_{y^\alpha}(\mathcal{D})$ as long as $\sigma$ is Lipschitz continuous and has a polynomial growth bound; namely, $|\sigma(t)|\leq C(1+|t|^p)$ for all $t\in\mathbb{R}$ with a constant $C>0$ and an integer $p>0$. In our Ritz method, $\mathcal{N}_{L,M,\sigma,h}$ is taken as the approximate class of $H^{1,\tb}_{y^\alpha}(\mathcal{D})$.

\subsection{Approximation property}\label{Sec_approximation_property}
We will show that functions in $H^{1,\tb}_{y^\alpha}(\mathcal{D})$ can be approximated by the special networks in $\mathcal{N}_{L,M,\sigma,h}$ as $M\rightarrow\infty$. To illustrate the approximation property, we first introduce the Barron space and then derive the error bounds for neural-network approximation, assuming that the target function has components in the Barron space.
In this section, we specifically focus on the FNNs with ReLU activation and specify $\sigma(x)=\max(x,0)$. For simplicity, we concatenate variables $x\in\mathbb{R}^d$ and $y\in\mathbb{R}$ by writing $z=(x,y)\in\mathbb{R}^{d+1}$.

\subsubsection{Barron space}
Let us first quickly review the Barron space and norm. We will focus on the definition discussed in \cite{E2020} which represents infinitely wide two-layer ReLU FNNs. Following Section \ref{Sec_FNN}, recall the set of two-layer ReLU FNNs without output bias is given by
\begin{equation}
\mathcal{F}_{2,M,\text{ReLU}}=\left\{\hat{\phi}~\Big|~\hat{\phi}(z)=\frac{1}{M}\sum_{i=1}^Ma_i\sigma(b_i^Tz+c_i),\quad\forall(a_i,b_i,c_i)\in \mathbb{R}\times\mathbb{R}^{d+1}\times\mathbb{R}\right\}.
\end{equation}
For a probability measure $\pi$ on $\mathbb{R}\times\mathbb{R}^{d+1}\times\mathbb{R}$, we set the function
\begin{equation}
f_\pi(z)=\int_\mathcal{D}a\sigma(b^Tz+c)\pi(\td a,\td b,\td c)=\mathbb{E}_\pi[a\sigma(b^Tz+c)],\quad\forall z\in\mathbb{R}^{d+1},
\end{equation}
given this expression exists. For a function $u:\mathbb{R}^{d+1}\rightarrow\mathbb{R}$, we use $\Pi_u$ to denote the set of all probability measures $\pi$ such that $f_\pi(z)=u(z)$ almost everywhere. Then the Barron norm is defined as
\begin{equation}
\|u\|_\mathcal{B}^2:=\underset{\pi\in\Pi_u}{\inf}\int_{\mathbb{R}\times\mathbb{R}^{d+1}\times\mathbb{R}}a^2(|b|+|c|)^2\pi(\td a,\td b,\td c)=\underset{\pi\in\Pi_u}{\inf}~\mathbb{E}_\pi[a^2(|b|+|c|)^2].
\end{equation}
The infimum of the empty set is considered as $+\infty$. The set of all functions with finite Barron norm is denoted by $\mathcal{B}$. It is shown in \cite{E2020} that $\mathcal{B}$ equipped with the Barron norm is a Banach space which is called Barron space.

\subsection{Error estimation}
Let $u$ be a function in $\in H^{1,\tb}_{y^\alpha}(\mathcal{D})$, and further assume that $u$ converges to zero exponentially as $y\rightarrow\infty$. In the error analysis, we make the following assumption that $u$ can be factorized explicitly by components vanishing on $\partial_L\mathcal{D}$ and decaying to zero exponentially as $y\rightarrow\infty$.
\begin{assum}\label{Assum01}
There exist functions $h\in H_0^1(\Omega)$, $v\in\mathcal{B}$ and some number $\eta>0$ such that
\begin{equation}\label{14}
u(z)=v(z)h(x)e^{-\eta y}.
\end{equation}
\end{assum}
Especially, if Assumption \ref{Assum01} holds, we can normalize $h$ such that $|h|\leq1$ and $|\nabla h|\leq1$. Assumption \ref{Assum01} is indeed satisfied in some situations. For example, in the case $\Omega=[-1,1]^2$, $s=0.5$ and $f=\sqrt{2}\pi\sin(\pi x_1)\sin(\pi x_2)$, the solution $u$ of \eqref{02} is given by $u=\sin(\pi x_1)\sin(\pi x_2)e^{-\sqrt{2}\pi y}$. Note that by Taylor series $\sin(\pi x)=-\frac{\pi}{2}(x^2-1)+\frac{\pi}{8}(x^2-1)^2+O\left((x^2-1)^3\right)$, $u$ can be written as \eqref{14} with
\begin{gather*}
v=\left(-\pi+\frac{\pi}{4}(x_1^2-1)+O\left((x_1^1-1)^2\right)\right)\left(-\pi+\frac{\pi}{4}(x_2^2-1)+O\left((x_2^2-1)^2\right)\right),\\
h=\frac{(x_1^2-1)(x_2^2-1)}{4},\quad\eta=\sqrt{2}\pi.
\end{gather*}

Now we investigate the approximation error between $\mathcal{N}_{2,M,\text{ReLU},h}$ and $H^{1,\tb}_{y^\alpha}(\mathcal{D})$. It suffices to consider a special subset $\mathcal{S}_{2,M,\text{ReLU},h}$ given by
\begin{equation}
\mathcal{S}_{2,M,\text{ReLU},h}=\left\{\hat{u}~|~\hat{u}(z)=\hat{v}(z)h(x)e^{-\eta y},\quad\forall\hat{v}\in\mathcal{F}_{2,M,\text{ReLU}}\right\}.
\end{equation}
Note in $\mathcal{S}_{2,M,\text{ReLU},h}$ only the parameters of $\hat{v}$ are free and trainable, while $\eta$ is fixed. Clearly, $\mathcal{S}_{2,M,\text{ReLU},h}$ is a subset of $\mathcal{N}_{2,M,\text{ReLU},h}$. The following theorem and proof are referred to the result in \cite{Muller2021} for deep Ritz method in a bounded domain.

\begin{theorem}\label{Thm02}
Let $u\in H^{1,\tb}_{y^\alpha}(\mathcal{D})$. If Assumption \ref{Assum01} is true with $|h|\leq1$ and $|\nabla h|\leq1$, then there exists some $\hat{u}\in\mathcal{S}_{2,M,\text{ReLU},h}$ such that
\begin{equation}\label{22}
\|\hat{u}-u\|_{H^{1,\tb}_{y^\alpha}(\mathcal{D})}\leq C(\Omega,\eta)M^{-\frac{1}{2}}\|v\|_\mathcal{B},
\end{equation}
with
\begin{equation}\label{24}
C(\Omega,\eta)=2^\frac{3}{2}|\Omega|^\frac{1}{2}\left[\frac{R_\Omega^2+1}{\alpha+1}+\frac{1}{\alpha+3}+\frac{(4\eta^3+2\eta^2)(R_\Omega^2+1)+4\eta^3+6\eta^2+6\eta+3}{8\eta^4}e^{-2\eta}\right]^\frac{1}{2},
\end{equation}
where $R_\Omega:=\max\left(\sup_{x\in\Omega}|x|,1\right)$.
\end{theorem}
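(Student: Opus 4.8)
The plan is to reduce the $H^{1,\tb}_{y^\alpha}(\mathcal{D})$-approximation of $u = v(z)h(x)e^{-\eta y}$ to the classical Barron approximation of $v$ on a suitable set, and then carry the error through the product structure by the Leibniz rule and the exponential weight. First I would invoke the standard two-layer ReLU Barron bound (the result behind \eqref{11}, as used in \cite{E2020,Muller2021}): there exists $\hat v\in\mathcal{F}_{2,M,\mathrm{ReLU}}$ with $\|v-\hat v\|_{L^\infty(K)}$ and $\|\nabla(v-\hat v)\|_{L^\infty(K)}$ both bounded by $C_K\,M^{-1/2}\|v\|_{\mathcal B}$ on any bounded set $K$; here the natural choice is $K = \Omega\times[0,\infty)$ truncated appropriately, or more precisely I would use that the Barron bound on a ball of radius $R_\Omega$ (in the $x$-variables) combined with the decay supplied by $e^{-\eta y}$ controls everything — the constant $R_\Omega^2+1$ appearing in \eqref{24} strongly suggests the relevant geometric quantity is $\sup_{z}(|x|^2+1)$ over the effective support, i.e. $R_\Omega^2+1$. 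Set $\hat u(z) = \hat v(z)h(x)e^{-\eta y}\in\mathcal{S}_{2,M,\mathrm{ReLU},h}$ and write $e := v-\hat v$, so $u-\hat u = e(z)h(x)e^{-\eta y}$.

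Next I would expand the gradient: since $\|\cdot\|_{H^{1,\tb}_{y^\alpha}(\mathcal D)} = \|\nabla\cdot\|_{y^\alpha,\mathcal D}$, I need to bound $\|\nabla\big(e\,h\,e^{-\eta y}\big)\|_{y^\alpha,\mathcal D}$. By the product rule, $\nabla(ehe^{-\eta y})$ splits into three groups of terms: $(\nabla_z e)\,h\,e^{-\eta y}$, $e\,(\nabla_x h)\,e^{-\eta y}$ (an $x$-gradient only), and the $y$-derivative of the exponential, $-\eta\, e\, h\, e^{-\eta y}$ in the last slot. Using $|h|\le 1$, $|\nabla h|\le 1$, and the $L^\infty$ bounds on $e$ and $\nabla e$ from step one, each term is pointwise $\le (\text{const})\cdot M^{-1/2}\|v\|_{\mathcal B}$ times either $e^{-\eta y}$ or $y^{?}e^{-\eta y}$; actually the only $y$-dependence beyond $e^{-\eta y}$ comes from the $L^\infty$ bound on $e$ and $\nabla e$ over the cylinder, which I would take to be uniform (the ReLU Barron estimate gives a bound depending on $\sup(|z|)$; truncating at a fixed height or using that the weight $y^\alpha e^{-2\eta y}$ is integrable lets me absorb the unbounded $y$-range). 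Then $\|u-\hat u\|^2_{H^{1,\tb}_{y^\alpha}(\mathcal D)}$ is bounded by a sum of integrals of the form $\int_\Omega\!\int_0^\infty (1+|x|^2)\,y^{\alpha}(1+\text{poly}(y))e^{-2\eta y}\,\td y\,\td x$, each multiplied by $M^{-1}\|v\|_{\mathcal B}^2$. Evaluating these one-dimensional $y$-integrals via $\int_0^\infty y^{\alpha+k}e^{-2\eta y}\td y = \Gamma(\alpha+k+1)/(2\eta)^{\alpha+k+1}$ and the $x$-integral via $\int_\Omega(1+|x|^2)\td x \le |\Omega|(R_\Omega^2+1)$ produces exactly the bracketed expression in \eqref{24}: the $\frac{R_\Omega^2+1}{\alpha+1}$ and $\frac{1}{\alpha+3}$ terms come from the part of the domain / the part of the estimate where no exponential cutoff is needed (the leading $y^\alpha$ and $y^{\alpha+2}$ contributions), while the $e^{-2\eta}$-weighted cluster collects the boundary terms at the truncation height and the $\eta$-derivative contributions.

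The main obstacle I anticipate is \emph{bookkeeping the $L^\infty$ Barron estimate for $\nabla e$ on an unbounded cylinder} while keeping the constant explicit in the clean form \eqref{24}. The Barron approximation theorem is usually stated on a bounded domain, and $\nabla\hat v$ (piecewise constant, from ReLU) is globally bounded but the quantitative gradient bound grows with the radius of the domain; so I would either (i) split $\mathcal D = \Omega\times[0,T]\ \cup\ \Omega\times[T,\infty)$, apply the Barron bound on the compact piece with radius $\sim R_\Omega$ and control the tail purely by $e^{-\eta y}$-decay of $u$ and by choosing $\hat v$ to respect that decay, or (ii) observe that what actually enters is $\|e\|_{L^\infty}$ and $\|\nabla e\|_{L^\infty}$ weighted against $y^\alpha e^{-2\eta y}$, so only the behaviour on $|x|\le R_\Omega$, $y\lesssim 1/\eta$ matters quantitatively and the rest is exponentially negligible — giving the $e^{-2\eta}$ factor. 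Threading the explicit constants ($2^{3/2}$, $|\Omega|^{1/2}$, the exact $\eta$-powers $8\eta^4$ in the denominator) through this splitting and through the Leibniz expansion is the delicate part; everything else is the routine Gamma-integral computation sketched above. I would follow the argument of \cite{Muller2021} line by line for the bounded-domain core and only add the exponential-tail estimate as the new ingredient.
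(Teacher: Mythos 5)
Your overall skeleton (approximate $v$ by a two-layer ReLU network $\hat v$, set $\hat u=\hat v\,h\,e^{-\eta y}$, push the error through the Leibniz rule using $|h|\le1$, $|\nabla h|\le1$) matches the paper's, and your final product-rule step is essentially identical to the paper's estimate $\|\hat u-u\|_{H^{1,\tb}_{y^\alpha}(\mathcal{D})}^2\le 4\|\hat v-v\|_{1,e^{-2\eta y}y^\alpha,\mathcal{D}}^2$. But the way you propose to produce $\hat v$ is where the argument goes wrong. You want to invoke an $L^\infty$ Barron bound for both $v-\hat v$ \emph{and} $\nabla(v-\hat v)$ at rate $M^{-1/2}$ on a bounded set, then integrate against the weight. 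A uniform rate-$M^{-1/2}$ bound on $\|\nabla(v-\hat v)\|_{L^\infty}$ is not an available consequence of $v\in\mathcal{B}$: the $M^{-1/2}$ rate comes from a variance (Maurey/Monte Carlo) argument, which is intrinsically a Hilbert-space statement; it gives $L^2$-type control of the gradient, not $L^\infty$ control. You correctly flag this as "the main obstacle," but the truncation/splitting workarounds you sketch do not repair it --- they still presuppose the pointwise gradient estimate on the compact piece.

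The missing idea is that no $L^\infty$ estimate and no truncation of the cylinder are needed at all: one runs the sampling argument \emph{directly in the weighted Sobolev space} $H^1_{e^{-2\eta y}y^\alpha}(\mathcal{D})$. Take $\pi\in\Pi_v$ with $\mathbb{E}_\pi[a^2(|b|+|c|)^2]\le 2\|v\|_{\mathcal B}^2$, and bound the second moment $\mathbb{E}_\pi\bigl[\|a\sigma(b^Tz+c)\|_{1,e^{-2\eta y}y^\alpha,\mathcal{D}}^2\bigr]$ using only the pointwise facts $|\sigma(t)|\le|t|$ and $|\sigma'(t)|\le 1$ together with $|z|^2\le R_\Omega^2+y^2$; this reduces to the single explicit integral $I_1=\int_0^\infty(R_\Omega^2+y^2+1)e^{-2\eta y}y^\alpha\,\td y$, which is finite precisely because the weight $y^\alpha e^{-2\eta y}$ is integrable on $(0,\infty)$ --- the unboundedness of the cylinder is a non-issue. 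Splitting $I_1$ at $y=1$ (dropping $e^{-2\eta y}$ on $[0,1]$, replacing $y^\alpha$ by $y$ on $[1,\infty)$) yields exactly the two groups of terms in \eqref{24}: the $\frac{R_\Omega^2+1}{\alpha+1}+\frac{1}{\alpha+3}$ part and the $e^{-2\eta}$-weighted part (which is a tail integral from $y=1$, not a boundary term at a truncation height as you guessed). Then the standard variance bound $\mathbb{E}_{\pi^M}\|\frac{1}{M}\sum_i a_i\sigma(b_i^Tz+c_i)-v\|^2\le\frac{1}{M}\mathbb{E}_\pi\|a\sigma(b^Tz+c)\|^2$ in this Hilbert space furnishes the required $\hat v$. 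Your route, even if the $L^\infty$ gradient bound were granted, would not reproduce the clean constant \eqref{24}; the Hilbert-space sampling argument is both the simpler and the only workable path here.
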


\begin{proof}
By the definition of the Barron norm, there exsits some probability measure $\pi$ such that $f_\pi=v$ a.e. and $\mathbb{E}_\pi\left[a^2(|b|+|c|)^2\right] \leq 2\|v\|_\mathcal{B}^2$. For all $(a,b,c)\in\mathbb{R}\times\mathbb{R}^{d+1}\times\mathbb{R}$, using the facts $|\sigma(t)|\leq|t|$ and $|\sigma'(t)|=\chi_{t\geq0}$ we have
\begin{eqnarray}\label{15}
\|a\sigma(b^Tz+c)\|_{1,e^{-2\eta y}y^\alpha,\mathcal{D}}^2 & = &
\int_\mathcal{D}\left[|a\sigma(b^Tz+c)|^2+|\nabla(a\sigma(b^Tz+c))|^2\right]e^{-2\eta y}y^\alpha\td z \nonumber \\
& \leq & \int_\mathcal{D}\left[a^2(b^Tz+c)^2+a^2\chi_{b^Tz+c\geq0}|\nabla(b^Tz+c)|^2\right]e^{-2\eta y}y^\alpha\td z \nonumber \\
& \leq & \int_\mathcal{D}\left[a^2(|b||z|+|c|)^2+a^2|b|^2\right]e^{-2\eta y}y^\alpha\td z.
\end{eqnarray}
Since $|z|=(|x|^2+y^2)^\frac{1}{2}\leq(R_\Omega^2+y^2)^\frac{1}{2}$, we have
\begin{eqnarray}\label{16}
& & \int_\mathcal{D}\left[a^2(|b||z|+|c|)^2+a^2|b|^2\right]e^{-2\eta y}y^\alpha\td z
\nonumber \\
& \leq & \int_\mathcal{D}\left[(R_\Omega^2+y^2)a^2(|b|+|c|)^2+a^2|b|^2\right]e^{-2\eta y}y^\alpha\td z \nonumber \\
& \leq & a^2(|b|+|c|)^2\int_\mathcal{D}(R_\Omega^2+y^2+1)e^{-2\eta y}y^\alpha\td z
\nonumber \\
& \leq & a^2(|b|+|c|)^2|\Omega|\int_0^\infty(R_\Omega^2+y^2+1)e^{-2\eta y}y^\alpha\td y:=a^2(|b|+|c|)^2|\Omega|\cdot I_1.
\end{eqnarray}
While $I_1$ is bounded above since
\begin{multline}\label{17}
I_1\leq\int_0^1(R_\Omega^2+y^2+1)y^\alpha\td y+\int_1^\infty(R_\Omega^2+y^2+1)e^{-2\eta y}y\td y\\
=\left(\frac{R_\Omega^2+1}{\alpha+1}+\frac{1}{\alpha+3}\right)+\frac{(4\eta^3+2\eta^2)(R_\Omega^2+1)+4\eta^3+6\eta^2+6\eta+3}{8\eta^4}e^{-2\eta}:=I_2.
\end{multline}
Combining \eqref{15},\eqref{16},\eqref{17} we have
\begin{equation}\label{18}
\mathbb{E}_\pi\left[\|a\sigma(b^Tz+c)\|_{1,e^{-2\eta y}y^\alpha,\mathcal{D}}^2\right]\leq \mathbb{E}_\pi\left[a^2(|b|+|c|)^2\right]|\Omega|\cdot I_2\leq2I_2|\Omega|\cdot\|v\|_\mathcal{B}^2.
\end{equation}

On the other hand, the mapping
\begin{equation*}
\mathbb{R}\times\mathbb{R}^{d+1}\times\mathbb{R},\quad (a,b,c)\rightarrow a\sigma(b^Tz+c)
\end{equation*}
is continuous and hence Bochner measurable. Also, \eqref{18} leads to
\begin{equation*}
\mathbb{E}_\pi\left[\|a\sigma(b^Tz+c)\|_{1,e^{-2\eta y}y^\alpha,\mathcal{D}}\right]\leq\left(\mathbb{E}_\pi\left[\|a\sigma(b^Tz+c)\|_{1,e^{-2\eta y}y^\alpha,\mathcal{D}}^2\right]\right)^\frac{1}{2}<\infty,
\end{equation*}
which implies the integral $\int_\mathcal{D}a\sigma(b^Tz+c)\pi(\td a,\td b,\td c)$ is a Bochner integral.

We note the fact that if $\xi_1,\cdots,\xi_M$ are independent samples from a random variable $\xi$, then
\begin{eqnarray*}
& &
\mathbb{E}\left(\frac{1}{M}\sum_{i=1}^M\xi_i-\mathbb{E}\xi\right)^2=\mathbb{E}\left(\frac{1}{M}\sum_{i=1}^M(\xi_i-\mathbb{E}\xi)\right)^2 \\
& = & \frac{1}{M^2}\left(\sum_{i=1}^M\mathbb{E}(\xi_i-\mathbb{E}\xi)^2+\sum_{1\leq i<j\leq M}\mathbb{E}(\xi_i-\mathbb{E}\xi)\cdot\mathbb{E}(\xi_j-\mathbb{E}\xi)\right)=\frac{1}{M}\sum_{i=1}^M\mathbb{E}(\xi-\mathbb{E}\xi)^2\\
& =& \frac{1}{M}\mathbb{E}\xi^2-\frac{1}{M}(\mathbb{E}\xi)^2\leq\frac{1}{M}\mathbb{E}\xi^2.
\end{eqnarray*}
By similar argument, for independent samples $\left\{(a_i,b_i,c_i)\right\}$ from $\pi$, we have
\begin{equation*}
\mathbb{E}_{\pi^M}\left[\left\|\frac{1}{M}\sum_{i=1}^M a_i\sigma(b_i^Tz+c_i)-f_\pi(z)\right\|_{1,e^{-2\eta y}y^\alpha,\mathcal{D}}^2\right]\leq\frac{1}{M}\mathbb{E}_\pi\left[\|a\sigma(b^Tz+c)\|_{1,e^{-2\eta y}y^\alpha,\mathcal{D}}^2\right].
\end{equation*}
In particular, there exists $\left\{(a_i,b_i,c_i)\right\}$ such that
\begin{equation}\label{19}
\left\|\frac{1}{M}\sum_{i=1}^M a_i\sigma(b_i^Tz+c_i)-f_\pi(z)\right\|_{1,e^{-2\eta y}y^\alpha,\mathcal{D}}^2\leq\frac{1}{M}\mathbb{E}_\pi\left[\|a\sigma(b^Tz+c)\|_{1,e^{-2\eta y}y^\alpha,\mathcal{D}}^2\right].
\end{equation}
Let $\hat{v}=\frac{1}{M}\sum_{i=1}^M a_i\sigma(b_i^Tz+c_i)$, combining \eqref{18} and \eqref{19} have obtain
\begin{equation}\label{20}
\left\|\hat{v}-v\right\|_{1,e^{-2\eta y}y^\alpha,\mathcal{D}}^2\leq 2I_2|\Omega|M^{-1}\cdot\|v\|_\mathcal{B}^2
\end{equation}

Finally, let $\hat{u}=\hat{v}(z)h(x)e^{-\eta y}$ and denote $\bar{v}:=\hat{v}-v$, then
\begin{eqnarray}\label{21}
& & \|\hat{u}-u\|_{H^{1,\tb}_{y^\alpha}(\mathcal{D})}^2\nonumber \\
&=& \|\nabla(\bar{v}(z)h(x)e^{-\eta y})\|_{y^\alpha,\mathcal{D}}^2 \nonumber \\
& = & \int_\mathcal{D}\left(|h(x)\nabla_x\bar{v}(z)+\bar{v}(z)\nabla_xh(x)|^2+|\partial_y\bar{v}(z)-\bar{v}(z)|^2h^2(x)\right)e^{-2\eta y}y^\alpha\td z \nonumber \\
& \leq& 4\int_\mathcal{D}\left(|\nabla \bar{v}|^2+|\bar{v}|^2\right)e^{-2\eta y}y^\alpha\td z=4\|\hat{v}-v\|_{1,e^{-2\eta y}y^\alpha,\mathcal{D}}^2.
\end{eqnarray}
The inequality \eqref{22} directly follows \eqref{20} and \eqref{21}.
\end{proof}

Theorem \ref{Thm02} provides a fractional equation dimension-independent approximation error bound for neural networks in $\mathcal{S}_{2,M,\text{ReLU},h}$ given that the target function $u$ has a Barron component. This is a desired property since it avoids the curse of dimensionality. Since $\mathcal{S}_{2,M,\text{ReLU},h}\subset\mathcal{N}_{2,M,\text{ReLU},h}$, Theorem \ref{Thm02} also holds for $\mathcal{N}_{2,M,\text{ReLU},h}$.

\section{Ritz method and Error Estimation}
The extended problem \eqref{02} can be solved practically by Ritz method. Thanks to the approximation property of the neural network class discussed in Section \ref{Sec_approximation_property}, we can replace the hypothesis space $H^{1,\tb}_{y^\alpha}(\mathcal{D})$ with $\mathcal{N}_{L,M,\sigma,h}$ in the minimization \eqref{13}, obtaining
\begin{equation}\label{23}
\underset{\hat{w}\in \mathcal{N}_{L,M,\sigma,h}}{\min}\mathcal{I}[\hat{w}].
\end{equation}
Then the solution of \eqref{23} will be an approximation to the solution of \eqref{13}. Same as in Section \ref{Sec_approximation_property}, we will estimate the solution error for the two-layer ReLU networks; namely, we consider the case that $L=2$ and $\sigma(t)=\max(0,t)$. The final error of the original problem \eqref{01} will thereafter be presented.

\subsection{Error estimation}
Let $u^*\in H^{1,\tb}_{y^\alpha}(\mathcal{D})$ be a minimizer of \eqref{13}; namely,
\begin{equation}
\mathcal{I}[u^*]=\min_{w\in H^{1,\tb}_{y^\alpha}(\mathcal{D})}\mathcal{I}[w]
\end{equation}
Given that Assumption \ref{Assum01} holds for $u^*$ with a factorization $u^*(z)=v^*(z)h^*(x)e^{-\eta^* y}$, let $\hat{u}^*\in \mathcal{N}_{2,M,\text{ReLU},h^*}$ be a minimizer of \eqref{23}; namely,
\begin{equation}
\mathcal{I}[\hat{u}^*]=\min_{\hat{w}\in \mathcal{N}_{2,M,\text{ReLU},h^*}}\mathcal{I}[\hat{w}].
\end{equation}

We first introduce the following C\'{e}a Lemma \cite{Miller2001}.
\begin{proposition}\label{Prop01}
Let $X$ be a Hilbert space, $V\subset X$ any subset and $a:X\times X\rightarrow\mathbb{R}$ a symmetric, continuous and $\alpha$-coercive bilinear form. For $f\in X'$ define the quadratic energy $E_f(u):=\frac{1}{2}a(u,u)-f(u)$ and denote its unique minimizer by $u_f$. Then for every $v\in V$ it holds that
\begin{equation}
\|v-u_f\|_X\leq\sqrt{\alpha^{-1}\left(2\left(E_f(v)-\inf_{\tilde{v}\in V}E_f(\tilde{v})\right)+\inf_{\tilde{v}\in V}(\tilde{v}-u_f,\tilde{v}-u_f)\right)}.
\end{equation}
\end{proposition}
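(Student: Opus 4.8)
The statement to prove is Proposition~\ref{Prop01}, the C\'ea-type lemma. Here is how I would proceed.

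\textbf{Setup and strategy.} Since $a$ is symmetric, continuous and $\alpha$-coercive, the Lax--Milgram theorem guarantees that $E_f$ has a unique minimizer $u_f\in X$, characterized variationally by $a(u_f,w)=f(w)$ for all $w\in X$. The plan is to fix an arbitrary $v\in V$, expand $E_f(v)$ around $u_f$ using this characterization, and then invoke coercivity to bound $\|v-u_f\|_X^2$. The key algebraic identity I would establish first is the ``energy splitting'': for any $w\in X$,
\begin{equation*}
E_f(w)=E_f(u_f)+\tfrac12 a(w-u_f,w-u_f).
\end{equation*}
This follows by writing $w=u_f+(w-u_f)$, expanding $\tfrac12 a(w,w)$ bilinearly into $\tfrac12 a(u_f,u_f)+a(u_f,w-u_f)+\tfrac12 a(w-u_f,w-u_f)$, using $f(w)=f(u_f)+f(w-u_f)$, and cancelling the cross term against $f(w-u_f)$ via the variational identity $a(u_f,w-u_f)=f(w-u_f)$.

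\textbf{Main estimate.} Applying the splitting identity to $w=v$ and to an arbitrary $\tilde v\in V$, and using $\alpha$-coercivity $a(v-u_f,v-u_f)\ge \alpha\|v-u_f\|_X^2$, I would get
\begin{equation*}
\alpha\|v-u_f\|_X^2\le a(v-u_f,v-u_f)=2\bigl(E_f(v)-E_f(u_f)\bigr).
\end{equation*}
Now I insert $\inf_{\tilde v\in V}E_f(\tilde v)$ as an intermediary: $E_f(v)-E_f(u_f)=\bigl(E_f(v)-\inf_{\tilde v\in V}E_f(\tilde v)\bigr)+\bigl(\inf_{\tilde v\in V}E_f(\tilde v)-E_f(u_f)\bigr)$. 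The first bracket is the first term appearing under the square root in the statement. For the second bracket, I use the splitting identity again in the form $E_f(\tilde v)-E_f(u_f)=\tfrac12 a(\tilde v-u_f,\tilde v-u_f)$, so that $\inf_{\tilde v\in V}E_f(\tilde v)-E_f(u_f)=\tfrac12\inf_{\tilde v\in V}a(\tilde v-u_f,\tilde v-u_f)$. This matches the second term $\inf_{\tilde v\in V}(\tilde v-u_f,\tilde v-u_f)$ in the statement once one reads $a(\cdot,\cdot)$ as the relevant inner product (or, more precisely, this is the form of the bound one obtains; the notation $(\cdot,\cdot)$ in the proposition should be understood as $a(\cdot,\cdot)$, which is itself an inner product on $X$ by symmetry and coercivity). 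Dividing by $\alpha$ and taking square roots yields the claimed bound
\begin{equation*}
\|v-u_f\|_X\le\sqrt{\alpha^{-1}\Bigl(2\bigl(E_f(v)-\inf_{\tilde v\in V}E_f(\tilde v)\bigr)+\inf_{\tilde v\in V}a(\tilde v-u_f,\tilde v-u_f)\Bigr)}.
\end{equation*}

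\textbf{Main obstacle.} The proof is essentially routine Hilbert-space algebra; there is no deep analytic difficulty. The one point demanding care is bookkeeping the sign and the role of the infimum: because $V$ is merely a subset (not a subspace), one cannot use Galerkin orthogonality, and the term $E_f(v)-\inf_{\tilde v\in V}E_f(\tilde v)$ is genuinely needed to measure how far $v$ is from being the best element of $V$ in the energy — this is precisely what makes the lemma applicable to a neural-network class $\mathcal N_{2,M,\mathrm{ReLU},h}$, where the minimizer $\hat u^*$ of $\mathcal I$ over the class is not found exactly in practice. I would also note explicitly that continuity of $a$ is not actually used in the bound as stated (it only guarantees $f\in X'$ makes sense and $E_f$ is well-defined/finite); coercivity and symmetry carry the argument. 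A final remark: to later apply this proposition to $X=H^{1,\tb}_{y^\alpha}(\mathcal D)$ with $a(w_1,w_2)=(\nabla w_1,\nabla w_2)_{y^\alpha,\mathcal D}$ and $V=\mathcal N_{2,M,\mathrm{ReLU},h^*}$, one combines it with Theorem~\ref{Thm02} to control $\inf_{\tilde v\in V}\|\tilde v-u^*\|_X$ by $C(\Omega,\eta)M^{-1/2}\|v^*\|_\mathcal B$, which is the route to the $O(M^{-1/2})$ solution error.
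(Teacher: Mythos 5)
Your proof is correct. The paper does not actually prove this proposition — it is quoted with a citation — and your argument (the energy-splitting identity $E_f(w)=E_f(u_f)+\tfrac12 a(w-u_f,w-u_f)$ followed by coercivity and insertion of $\inf_{\tilde v\in V}E_f(\tilde v)$) is exactly the standard derivation behind the cited result; your reading of the ambiguous notation $(\tilde v-u_f,\tilde v-u_f)$ as the bilinear form $a(\cdot,\cdot)$ is also the one consistent with how the paper applies the lemma in its estimate \eqref{27}.
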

It is trivial to show that $(\nabla\cdot,\nabla\cdot)_{y^\alpha,\mathcal{D}}$ is a symmetric, continuous and $1$-coercive bilinear form. Therefore by Lemma \ref{Prop01} and Theorem \ref{Thm02} we have
\begin{multline}\label{27}
\|\hat{u}^*-u^*\|_{H^{1,\tb}_{y^\alpha}(\mathcal{D})}\leq\sqrt{\inf_{\hat{u}\in \mathcal{N}_{2,M,\text{ReLU},h^*}}(\nabla(\hat{u}-u^*),\nabla(\hat{u}-u^*))_{y^\alpha,\mathcal{D}}}\\
\leq\inf_{\hat{u}\in \mathcal{N}_{2,M,\text{ReLU},h^*}}\|\hat{u}-u^*\|_{H^{1,\tb}_{y^\alpha}(\mathcal{D})}\leq C(\Omega,\eta^*)M^{-\frac{1}{2}}\|v^*\|_\mathcal{B}.
\end{multline}

To derive the error for the original problem \eqref{01}, we introduce the following trace theorem \cite{Nochetto2015},
\begin{proposition}\label{Prop02}
The trace operator $\ttr$ defined in \eqref{25} satisfies $\ttr\{H^{1,\tb}_{y^\alpha}(\mathcal{D})\}=\mathbb{H}^s(\Omega)$, and
\begin{equation}\label{28}
\|\ttr\{u\}\|_{\mathbb{H}^s(\Omega)}\leq C\|u\|_{H^{1,\tb}_{y^\alpha}(\mathcal{D})},\quad\forall u\in H^{1,\tb}_{y^\alpha}(\mathcal{D}),
\end{equation}
where $C$ is a constant only depending on $\Omega$ and $s$, and the space $\mathbb{H}^s(\Omega)$ is defined as
\begin{equation}
\mathbb{H}^s(\Omega)=\left\{u=\sum_{n=1}^\infty\tilde{u}_n\phi_n\in L^2(\Omega):\|u\|_{\mathbb{H}^s}=\sum_{n=1}^\infty\lambda_n^s|\tilde{u}_n|^2<\infty\right\},
\end{equation}
with $(\lambda_n,\phi_n)$ being all the eigenvalues and orthonormal eigenfunctions of \eqref{26}. Especially, $\mathbb{H}^s(\Omega)$ can characterized by
\begin{equation}
\mathbb{H}^s(\Omega)u=\begin{cases}
H^s(\Omega),\quad s\in(0,\frac{1}{2}),\\H_{00}^\frac{1}{2}(\Omega),\quad s=\frac{1}{2},\\H_0^s(\Omega),\quad s\in(\frac{1}{2},1),\end{cases}
\end{equation}
where
\begin{equation}
H_{00}^\frac{1}{2}(\Omega)=\left\{u\in H^\frac{1}{2}(\Omega):\int_\Omega\frac{|u(x)|^2}{\text{\rm dist}(x,\partial\Omega)}\td x<\infty\right\}.
\end{equation}
\end{proposition}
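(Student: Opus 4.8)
The plan is to reduce this $(d{+}1)$-dimensional trace statement to a one-parameter family of one-dimensional weighted inequalities by expanding in the Dirichlet eigenbasis $\{\phi_n\}$ of \eqref{26}. For $u$ in a dense class of smooth functions in $H^{1,\tb}_{y^\alpha}(\mathcal{D})$, write $u(x,y)=\sum_{n\ge1}u_n(y)\phi_n(x)$ with $u_n(y):=(u(\cdot,y),\phi_n)$; since $-\Delta\phi_n=\lambda_n\phi_n$ and the $\phi_n$ are $L^2(\Omega)$-orthonormal, Parseval gives
\[
\|\nabla u\|_{y^\alpha,\mathcal{D}}^2=\sum_{n\ge1}\int_0^\infty\big(\lambda_n|u_n(y)|^2+|u_n'(y)|^2\big)y^\alpha\,\td y,
\]
while $\ttr\{u\}(x)=\sum_n u_n(0)\phi_n(x)$, so $\|\ttr\{u\}\|_{\mathbb{H}^s(\Omega)}^2=\sum_n\lambda_n^s|u_n(0)|^2$. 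Thus both the bound \eqref{28} and the surjectivity of $\ttr$ become mode-by-mode statements that I can then sum.

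For a single mode I rescale $y$ to normalize $\lambda_n$: setting $g_n(t):=u_n(\lambda_n^{-1/2}t)$ and using $\alpha=1-2s$, a change of variables gives
\[
\int_0^\infty\big(\lambda_n|u_n(y)|^2+|u_n'(y)|^2\big)y^\alpha\,\td y=\lambda_n^{\frac{1-\alpha}{2}}\int_0^\infty\big(|g_n|^2+|g_n'|^2\big)t^\alpha\,\td t=\lambda_n^{s}\int_0^\infty\big(|g_n|^2+|g_n'|^2\big)t^\alpha\,\td t,
\]
with $u_n(0)=g_n(0)$. Hence everything reduces to the scalar claim: there is $C_\alpha>0$ with $|g(0)|^2\le C_\alpha\int_0^\infty(|g|^2+|g'|^2)t^\alpha\,\td t$ for all $g$ in the weighted $H^1(0,\infty)$-space vanishing at $+\infty$, and conversely every prescribed value $g(0)$ is attained with energy at most $C_\alpha'|g(0)|^2$. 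The inequality follows by writing $|g(0)|^2=-\int_0^\infty\!\big(\chi(t)|g(t)|^2\big)'\,\td t$ for a fixed smooth cutoff $\chi$ with $\chi(0)=1$ supported in $[0,1]$, expanding $\big(\chi|g|^2\big)'=\chi'|g|^2+2\chi g g'$, and applying Cauchy--Schwarz; the only possible obstruction is the factor $t^{-\alpha}$ produced near $t=0$, and $\int_0^1 t^{-\alpha}\,\td t<\infty$ precisely because $\alpha=1-2s<1$. Summing over $n$ and extending by density yields \eqref{28} with $C$ depending only on $\Omega$ (through the Poincar\'e/cutoff constants) and $s$.

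For the converse inclusion $\mathbb{H}^s(\Omega)\subseteq\ttr\{H^{1,\tb}_{y^\alpha}(\mathcal{D})\}$, given $\psi=\sum_n\psi_n\phi_n$ with $\sum_n\lambda_n^s|\psi_n|^2<\infty$ I define $u(x,y):=\sum_n\psi_n\,\Theta(\lambda_n^{1/2}y)\,\phi_n(x)$ for one fixed profile $\Theta$ with $\Theta(0)=1$ and exponential decay --- conveniently the minimizer of $\int_0^\infty(|\Theta|^2+|\Theta'|^2)t^\alpha\,\td t$ subject to $\Theta(0)=1$, which solves a modified-Bessel ODE and decays like $e^{-t}$. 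Reversing the rescaling identity gives $\|\nabla u\|_{y^\alpha,\mathcal{D}}^2=\big(\int_0^\infty(|\Theta|^2+|\Theta'|^2)t^\alpha\,\td t\big)\sum_n\lambda_n^s|\psi_n|^2<\infty$; each term vanishes on $\partial_L\mathcal{D}$ and decays as $y\to\infty$, so $u\in H^{1,\tb}_{y^\alpha}(\mathcal{D})$ and $\ttr\{u\}=\psi$. Finally, the identification of $\mathbb{H}^s(\Omega)$ with $H^s(\Omega)$, $H^{1/2}_{00}(\Omega)$, or $H^s_0(\Omega)$ comes from recognizing, from the spectral definition, that $\mathbb{H}^s(\Omega)=[L^2(\Omega),H^1_0(\Omega)]_s$ (the fractional power domain of $-\Delta$), followed by the classical Lions--Magenes computation of this interpolation scale. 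The main obstacle is the scalar weighted trace inequality together with the verification that the term-by-term extension genuinely has finite weighted $H^1$-norm; once the rescaling that turns the weight exponent $\tfrac{1-\alpha}{2}$ into $s$ is in hand, the rest is bookkeeping.
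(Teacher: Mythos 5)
The paper offers no proof of this proposition at all---it is quoted verbatim from the cited reference [Nochetto--Ot\'arola--Salgado]---so your argument is being measured against the standard proof in that source, and your overall architecture matches it: expand in the Dirichlet eigenbasis, use Parseval to write $\|\nabla u\|_{y^\alpha,\mathcal{D}}^2=\sum_n\int_0^\infty(\lambda_n|u_n|^2+|u_n'|^2)y^\alpha\,\td y$, rescale $y\mapsto\lambda_n^{-1/2}t$ to turn the mode-$n$ energy into $\lambda_n^{s}$ times a fixed one-dimensional weighted energy, reduce both the trace bound and surjectivity to a scalar problem on $(0,\infty)$ with weight $t^\alpha$, and identify $\mathbb{H}^s(\Omega)$ via interpolation. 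The rescaling bookkeeping ($\tfrac{1-\alpha}{2}=s$), the extension by a single Bessel-type profile $\Theta$ for surjectivity, and the Lions--Magenes identification are all correct.

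The one genuine gap is in your proof of the scalar inequality $|g(0)|^2\le C_\alpha\int_0^\infty(|g|^2+|g'|^2)t^\alpha\,\td t$ in the regime $\alpha>0$ (i.e.\ $s<\tfrac12$). Writing $|g(0)|^2=-\int_0^1(\chi|g|^2)'\,\td t$ and applying Cauchy--Schwarz to $2\int\chi g g'$ forces you to control either $\int_0^1|g|^2\,\td t$ (from the $\chi'|g|^2$ term) or $\int_0^1|g|^2t^{-\alpha}\,\td t$ (from splitting the cross term as $|g|t^{-\alpha/2}\cdot|g'|t^{\alpha/2}$). Neither quantity is dominated by $\int_0^1(|g|^2+|g'|^2)t^\alpha\,\td t$ when $\alpha>0$ without first knowing that $g$ is bounded near $t=0$---and that boundedness is essentially the trace estimate you are trying to prove, so the remark that ``$\int_0^1 t^{-\alpha}\,\td t<\infty$'' only closes the loop if you may pull out $\sup|g|^2$, which you have not yet earned. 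The standard repair is to avoid the cutoff entirely: from $|g(0)|\le|g(t)|+\int_0^t|g'(\tau)|\,\td\tau\le|g(t)|+\bigl(\int_0^t\tau^{-\alpha}\,\td\tau\bigr)^{1/2}\bigl(\int_0^1|g'|^2\tau^\alpha\,\td\tau\bigr)^{1/2}$, square, multiply by $t^\alpha$, and average over $t\in(0,1)$; since $\int_0^t\tau^{-\alpha}\,\td\tau=t^{1-\alpha}/(1-\alpha)<\infty$ for $\alpha<1$, this yields $|g(0)|^2\le 2(1+\alpha)\int_0^1|g|^2t^\alpha\,\td t+\tfrac{1+\alpha}{1-\alpha}\int_0^1|g'|^2t^\alpha\,\td t$ with a constant depending only on $s$. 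With that substitution (and a sentence justifying density of finite eigenfunction expansions in $H^{1,\tb}_{y^\alpha}(\mathcal{D})$, plus the finiteness of $\int_0^\infty(|\Theta|^2+|\Theta'|^2)t^\alpha\,\td t$ for your Bessel profile, whose derivative behaves like $t^{-\alpha}$ at the origin), your proof is complete.
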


Recall that the solution of original problem \eqref{01} is exactly the trace of the solution of the extended problem \eqref{02}. Also, Theorem \ref{Thm01} shows the equivalence between the extended problem \eqref{02} and the minimization problem \eqref{13}. Therefore, combining \eqref{27} and \eqref{28} leads to the following error estimate for the original solution.
\begin{theorem}\label{Thm03}
Given that the solution $u^*$ of \eqref{13} satisfies Assumption \ref{Assum01}, say $u^*(z)=v^*(z)h^*(x)e^{-\eta^* y}$. Let $U^*(x)$ be the solution of \eqref{01} and $\hat{u}^*(x,y)$ be the solution of \eqref{23} with $L=2$, $h=h^*$ and $\sigma(\cdot)=\max(0,\cdot)$. Then
\end{theorem}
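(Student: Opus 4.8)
The plan is to read off Theorem~\ref{Thm03} as a corollary of the results already in hand: the equivalence between the extended problem \eqref{02} and the minimization \eqref{13} (Theorem~\ref{Thm01}), the Caffarelli--Silvestre identification $U^*=\ttr\{u^*\}$ recalled in Section~\ref{Sec_Introduction}, the C\'ea-type bound \eqref{27}, and the trace inequality \eqref{28} of Proposition~\ref{Prop02}. No new estimates are needed; the task is to chain these together and track the constants, so that the final bound is $\|U^*-\ttr\{\hat{u}^*\}\|_{\mathbb{H}^s(\Omega)}\le C(\Omega,s)\,C(\Omega,\eta^*)\,M^{-1/2}\|v^*\|_\mathcal{B}$, i.e.\ the same dimension-independent rate $O(M^{-1/2})$.

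First I would record that $\ttr\{u^*\}=U^*$. Since $u^*$ minimizes \eqref{13}, the converse part of Theorem~\ref{Thm01} gives that $u^*$ solves the extended problem \eqref{02}; the extension property quoted in Section~\ref{Sec_Introduction} then gives that $x\mapsto u^*(x,0)=\ttr\{u^*\}(x)$ solves \eqref{01}, and by uniqueness of the solution of \eqref{01} (immediate from the spectral representation, which forces $(U,\phi_n)=\lambda_n^{-s}(f,\phi_n)$ for every $n$) this coincides with $U^*$. Next, note that $\hat{u}^*-u^*\in H^{1,\tb}_{y^\alpha}(\mathcal{D})$: indeed $u^*$ is assumed to lie there, and $\hat{u}^*\in\mathcal{N}_{2,M,\text{ReLU},h^*}\subset H^{1,\tb}_{y^\alpha}(\mathcal{D})$ because ReLU is Lipschitz with linear growth, as observed after \eqref{05}.

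Using linearity of $\ttr$ together with \eqref{28} applied to $\hat{u}^*-u^*$,
\[
\|\ttr\{\hat{u}^*\}-U^*\|_{\mathbb{H}^s(\Omega)}
=\|\ttr\{\hat{u}^*-u^*\}\|_{\mathbb{H}^s(\Omega)}
\le C\,\|\hat{u}^*-u^*\|_{H^{1,\tb}_{y^\alpha}(\mathcal{D})},
\]
with $C=C(\Omega,s)$ the constant of Proposition~\ref{Prop02}. Inserting the bound \eqref{27} — which itself uses the $1$-coercivity of $(\nabla\cdot,\nabla\cdot)_{y^\alpha,\mathcal{D}}$ to invoke Proposition~\ref{Prop01}, combined with Theorem~\ref{Thm02} applied to the factorization $u^*=v^*h^*e^{-\eta^*y}$ — then yields
\[
\|\ttr\{\hat{u}^*\}-U^*\|_{\mathbb{H}^s(\Omega)}\le C(\Omega,s)\,C(\Omega,\eta^*)\,M^{-\frac12}\,\|v^*\|_\mathcal{B},
\]
which is the claimed estimate.

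The only point that needs care — and the nearest thing to an obstacle — is that a genuine minimizer $\hat{u}^*$ of \eqref{23} need not exist, since $\mathcal{N}_{2,M,\text{ReLU},h^*}$ is not weakly closed. I would handle this either by assuming (as is customary in this literature) that the minimizer exists, or by running the whole argument with a near-minimizer $\hat{u}^*_\varepsilon$ satisfying $\mathcal{I}[\hat{u}^*_\varepsilon]\le\inf_{\hat{w}}\mathcal{I}[\hat{w}]+\varepsilon$, carrying the extra $2\varepsilon$ produced by the first term of Proposition~\ref{Prop01} through \eqref{27}, and letting $\varepsilon\to0$ at the end. Everything else is routine bookkeeping.
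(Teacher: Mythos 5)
Your proposal follows the paper's own argument exactly: identify $U^*=\ttr\{u^*\}$ via Theorem \ref{Thm01} and the Caffarelli--Silvestre extension property, then chain the trace inequality \eqref{28} with the C\'ea-type bound \eqref{27} to get the $O(M^{-1/2})$ estimate. Your additional remark about possible non-existence of an exact minimizer in $\mathcal{N}_{2,M,\text{ReLU},h^*}$ is a legitimate refinement the paper silently glosses over, but it does not change the route.
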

\begin{equation}
\|\hat{u}^*(x,0)-U^*(x)\|_{\mathbb{H}^s(\Omega)}\leq C M^{-\frac{1}{2}}\|v^*\|_\mathcal{B},
\end{equation}
where $C$ is a constant only depending on $\Omega$, $s$ and $\eta^*$.

For our network-based method, we obtain the solution error $O(M^{-1/2})$ provided that the true solution has a Barron component. This order $-1/2$ is consistent with that of the deep Ritz method solving regular Laplacian equations \cite{Muller2021} proved under the similar Barron framework. Moreover, the proposed method can be compared with existing methods solving fractional Laplacian \cite{Nochetto2015,Chen2020}. In the early work, finite elements or Laguerre functions are used to approximate the solution, having error orders of $O(N_\text{p}^{-\nu/d})$, where $\nu$ characterizes the regularity of the true solution and $N_\text{p}$ is the number of free parameters. If $d$ is relatively low, say $d<2\nu$, then $-\nu/d<-1/2$ and hence the early methods converges faster. Otherwise, the network-based methods outperform the existing methods in the error order. This fact implies that neural networks are advantageous over other classical structures in the high-dimensional approximation.

\subsection{Implementation}
In the proposed method, we solve the optimization \eqref{23}, finding a minimizer of $\mathcal{I}[\cdot]$ in the hypothesis space $\mathcal{N}_{L,M,\sigma,h}$. Note that
\begin{equation}\label{12}
\mathcal{I}[\hat{\phi}(x,y)]=\frac{1}{2}\int_0^\infty y^\alpha\left(\int_\Omega|\nabla \hat{\phi}(x,y)|^2\td x\right)\td y-d_s\int_\Omega f(x)\hat{\phi}(x,0)\td x.
\end{equation}
In practice, numerical quadrature is required for the integrals in \eqref{12}. Since $d$ might be moderately large, for the integral in terms of $x$ over $\Omega$, one choice is using Monte Carlo-type quadrature. Specifically, we prescribe a set of $N$ quadrature nodes $\mathcal{T}=\{x_n\}_{n=1}^N$ which are uniformly distributed in $\Omega$, then for each $y\in(0,\infty)$,
\begin{equation}\label{07}
\int_\Omega|\nabla \hat{\phi}(x,y)|^2\td x\approx\frac{|\Omega|}{N}\sum_{n=1}^N|\nabla\hat{\phi}(x_n,y)|^2,\quad\int_\Omega f(x)\hat{\phi}(x,0)\td x\approx\frac{|\Omega|}{N}\sum_{n=1}^Nf(x_n)\hat{\phi}(x_n,0).
\end{equation}
For the integral in terms of $y$ over $(0,\infty)$, specific quadrature rule is needed due to the singularity at $y=0$. It is shown in \cite{Bonito2017,Bonito2019} that the infinite integral involving fractional operators can be effectively computed by sinc quadrature. More precisely, we use the change of variable $y=e^\mu$ so that
\begin{equation}\label{06}
\int_0^\infty y^\alpha g(y)\td y=\int_{-\infty}^\infty e^{(\alpha+1)\mu}g(e^\mu)\td\mu
\end{equation}
for all $g\in L^1_{y\alpha}(0,\infty)$. Given $\bar{h}>0$, let
\begin{equation}
N_+:=\lceil\frac{\pi^2}{4s\bar{h}^2}\rceil,\quad N_-:=\lceil\frac{\pi^2}{4(1-s)\bar{h}^2}\rceil,\quad\mu_m:=m\bar{h},
\end{equation}
then \eqref{06} is evaluated by quadrature nodes $\{\mu_m\}_{m=-N_-,\cdots,N_+}$ and uniform weights $\bar{h}$, namely,
\begin{equation}\label{08}
\int_{-\infty}^\infty e^{(\alpha+1)\mu}g(e^\mu)\td\mu\approx\bar{h}\sum_{m=-N_-}^{N_+}e^{(\alpha+1)\mu_m}g(e^{\mu_m}).
\end{equation}

Combining \eqref{07} and \eqref{08} we have an approximate functional of $\mathcal{I}$ given by
\begin{equation}\label{10}
\mathcal{I}_{\mathcal{T},\bar{h}}[\hat{\phi}(x,y)]=\frac{|\Omega|}{N}\left(\bar{h}\sum_{m=-N_-}^{N_+}e^{(\alpha+1)\mu_m}\sum_{n=1}^N|\nabla\hat{\phi}(x_n,e^{\mu_m})|^2-d_s\sum_{n=1}^Nf(x_n)\hat{\phi}(x_n,0)\right).
\end{equation}

Practically, we solve the following optimization,
\begin{equation}\label{09}
\underset{\hat{\phi}\in\mathcal{N}_{L,M,\sigma,h}}{\min}\mathcal{I}_{\mathcal{T},\bar{h}}[\hat{\phi}],
\end{equation}
whose solution can be regarded as a practical numerical solution of \eqref{13}.

\section{Numerical Experiments}
\subsection{The setting}
The proposed method is tested by numerical experiments in this section. Deep learning techniques are utilized to solve the minimization \eqref{09}. The overall setting is summarized as follows.
\begin{itemize}
  \item {\em Environment.}
  The experiment is performed in Python environment. PyTorch library with CUDA toolkit is utilized for neural network implementation and GPU-based parallel computing. The codes can be simply implemented on a desktop.
  \item {\em Optimizer and hyper-parameters.}
  The network-based optimization \eqref{09} is solved by the stochastic gradient descent (SGD) from PyTorch library. The SGD is implemented for totally 5000 epochs, with adaptively decaying learning rates.
  \item {\em Network setting.}
  The special network structure \eqref{05} is adopted. We choose $\sigma$ as the ReLU function. The parameters in $\theta'$ and $\theta''$ are initialized by
  \begin{equation}
  a, W_l, b_l\sim U(-\sqrt{M},\sqrt{M}),\quad l=1,\cdots,L.
  \end{equation}
  And $\gamma',\gamma''$ are initialized as 0.5. All these parameters are trained in the learning process.
  \item {\em Numerical quadrature.}
  For quadrature \eqref{07} over $\Omega$, we adopt the quasi-Monte Carlo with Halton sequence. For the cases of $d=3$ and $10$, totally $N=10^5$ and $5\times10^5$ sample points are used in the Monte Carlo quadrature, separated as 4 and 10 batches in the SGD, respectively. The sinc quadrature parameter $\bar{h}=$ is set as $1/3$.
  \item {\em Testing set and error evaluation.}
  We generate a testing set $\mathcal{X}$ consisting of $10^4$ random points uniformly distributed in $\Omega$ for error evaluation. Suppose $\hat{\phi}(x,y)$ is the neural network obtained by our method and $U(x)$ is the true solution of the original fractional Laplacian problem \eqref{01}, then the following relative $\ell^2$ error will be computed,
  \begin{equation}
  e_{\ell^2}(\mathcal{X}):=\left(  \sum_{x\in\mathcal{X}}|\hat{\phi}(x,0)-U(x)|^2 / \sum_{x\in\mathcal{X}}|U(x)|^2 \right)^\frac{1}{2}.
  \end{equation}
\end{itemize}

\subsection{A model problem}
In this example, we solve the following problem with an explicit solution to test the accuracy of the proposed method,
\begin{equation}\label{case1}
\begin{cases}
(-\Delta)^s u(x) = (d\pi^2)^s\prod_{i=1}^d\sin(\pi x_i),\quad\forall x:=[x_1,\cdots,x_d]\in\Omega:=(-1,1)^d,\\
u(x) = 0,\quad \forall x\in\partial\Omega,
\end{cases}
\end{equation}
whose true solution is given by $u(x)=\prod_{i=1}^d\sin(\pi x_i)$. We specify the boundary function $h(x)=\prod_{i=1}^d(1-x_i^2)$ in the network architecture \eqref{05}.

\subsubsection{Network size test}
First, we solve the problem \eqref{case1} with $s=0.5$ using special network \eqref{05} of various network sizes. The proposed method is implemented with network depth $L=2,3$ and width $M=25,50,100,200$. The errors of the numerical solutions are listed in Table \ref{Tab_case1_size_errors_d_3} as well as the total running time. We also computed the numerical error orders with respect to $M$. Note that most of the running time is occupied by training the networks, and very little is for the testing process (computing the errors).

From the table, it is observed that using larger sizes improves the accuracy at the expense of extra running time. For a fixed width $M$, the obtained error is reduced by more than half from $L=2$ to $L=3$. For a fixed depth $L$, the numerical order with respect to $M$ is roughly around the theoretical order $-1/2$ proved in Theorem \ref{Thm03}, and the deviation is probably due to the stochasticity and capability of the learning algorithm. Overall, the size pair $(L,M)=(3,200)$ obtains the most accurate solution; hence we continue using it in the following tests.

\begin{table}
\centering
\begin{tabular}{|c|c|c|c|c|c|c|}
  \hline
   & \multicolumn{3}{c|}{$L=2$} & \multicolumn{3}{c|}{$L=3$} \\\hline
  $M$ & Error & Order & Running Time & Error & Order & Running Time\\\hline
  $25$ & $8.252\times10^{-2}$ & N.A. & $7.8\times10^3$ & $2.410\times10^{-2}$ & N.A. & $8.0\times10^3$\\\hline
  $50$ & $4.337\times10^{-2}$ & $-0.93$ & $7.7\times10^3$ & $2.071\times10^{-2}$ & $-0.22$ & $8.0\times10^3$\\\hline
  $100$ & $3.522\times10^{-2}$ & $-0.3$ & $7.9\times10^3$ & $1.371\times10^{-2}$ & $-0.59$ & $8.6\times10^3$\\\hline
  $200$ & $2.529\times10^{-2}$ & $-0.48$ & $8.6\times10^3$ & $8.424\times10^{-3}$ & $-0.7$ & $1.3\times10^4$\\\hline
\end{tabular}
\caption{\em Errors $e_{\ell^2}(\mathcal{X})$, error orders with respect to $M$ and running time (seconds) for various $L$ and $M$ when $d=3$.}
\label{Tab_case1_size_errors_d_3}
\end{table}

\subsubsection{Comparison of structures}
Next, this problem is solved for $d=3$ with $s=0.1$, $0.3$, $0.5$, $0.7$, $0.9$. We test both the proposed special structure \eqref{05} and the following simple FNN structure for a comparison,
\begin{equation}\label{30}
\hat{\phi}(x,y)=\hat{\phi}'(x,y)h(x)e^{-\frac{1}{2}y},
\end{equation}
where $\hat{\phi}'$ is a generic FNN. Note in \eqref{30}, the power of the exponent is fixed with $-1/2$ instead of a trainable parameter, and no singular term is introduced. By this design, the comparison can reveal the advantages of using trainable exponential decaying powers and the special singular term. For both structures, we set $L=3$ and $M=200$ for the involved FNN.

The error curves versus epochs of the SGD are shown in Figure \ref{Fig_case1_errors_d_3}, and the errors of the finally obtained solutions are listed in Table \ref{Tab_case1_errors_d_3}. It is observed that for each structure, the smallest error is obtained when $s=0.5$, while larger errors are obtained when $s$ is close to 0 or 1. This is natural since the true solution has no singularity if $s=0.5$ and has higher singularity if $s$ approaches to 0 or 1. Moreover, from the comparison, it is clear that the special structure outperforms the simple one in obtaining smaller errors. We also remark that the time cost of the simple structure is only slightly less than the special structure.

\subsubsection{High-dimensional simulation}
Finally, we conduct a high-dimensional test by solving the model problem \eqref{case1} of $d=10$ and $s=0.2,0.3,0.4,0.5,0.6,0.7,0.8$. We continue using the special structure of $L=3$ and $M=200$ for approximation. The error curves and final errors are shown in Figure \ref{Fig_case1_errors_d_10} and Table \ref{Tab_case1_errors_d_10}. We observe that the proposed method is still effective for high dimensions, obtaining the errors $O(10^{-2})$. While the smallest error is obtained between $s=0.4$ and $0.5$.

To the best of our knowledge, our method is the first successful attempt at 10-D spectral fractional Laplacian equation. The existing methods \cite{Nochetto2015,Chen2020} using finite elements or Laguerre functions show their effectiveness in low-dimensional problems ($d=1,2$), while they are incapable of high-dimensional cases due to the limitation of storage. Therefore, we do not conduct a numerical comparison in this work.

\begin{figure}
\centering
\subfloat[$s=0.1$]{
\includegraphics[scale=0.5]{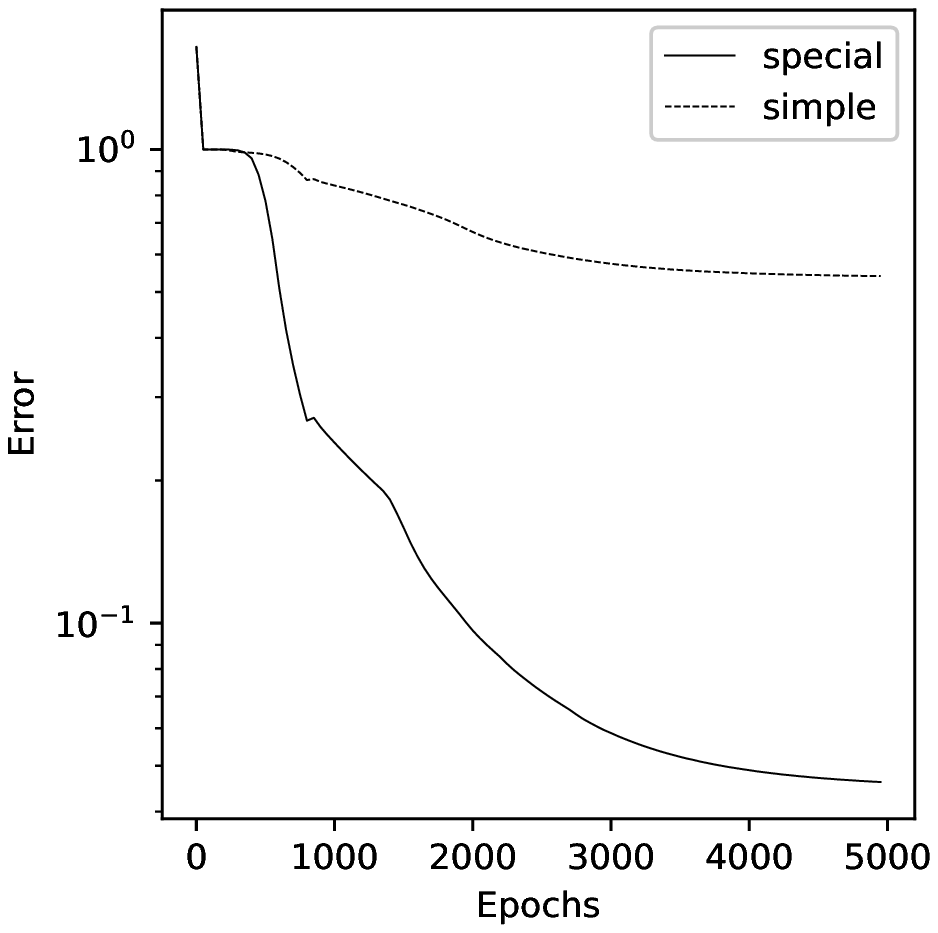}}
\subfloat[$s=0.3$]{
\includegraphics[scale=0.5]{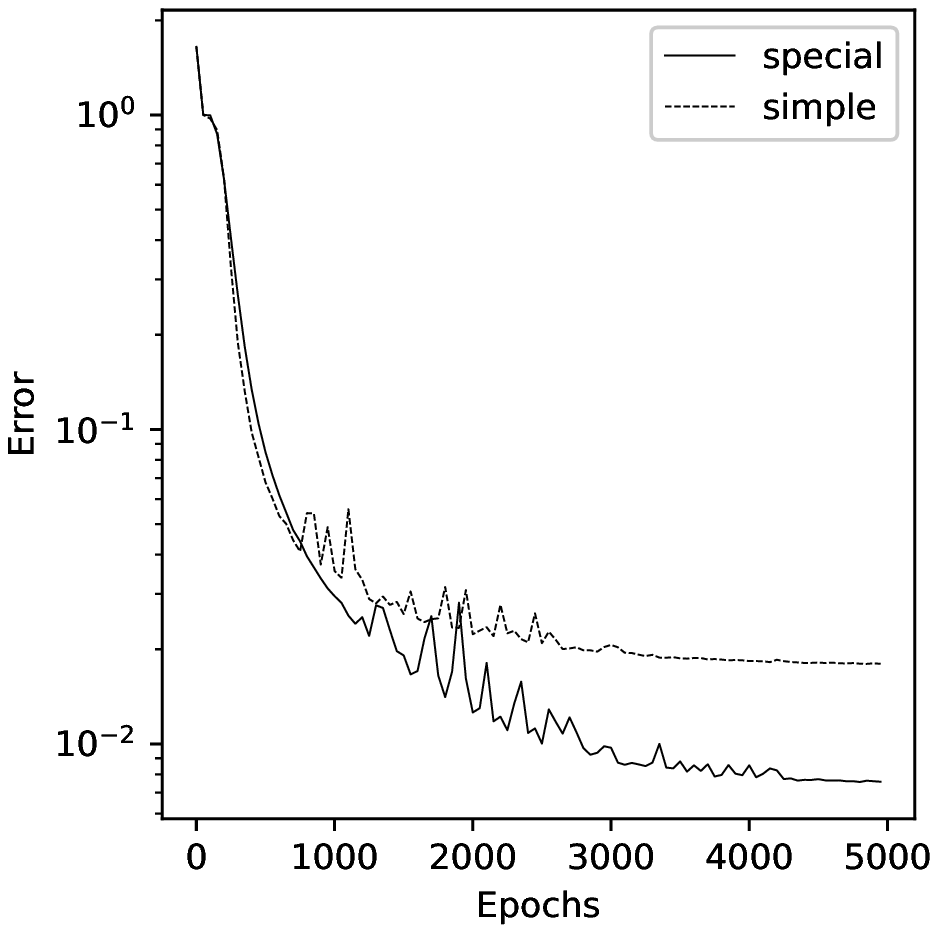}}\\
\subfloat[$s=0.5$]{
\includegraphics[scale=0.5]{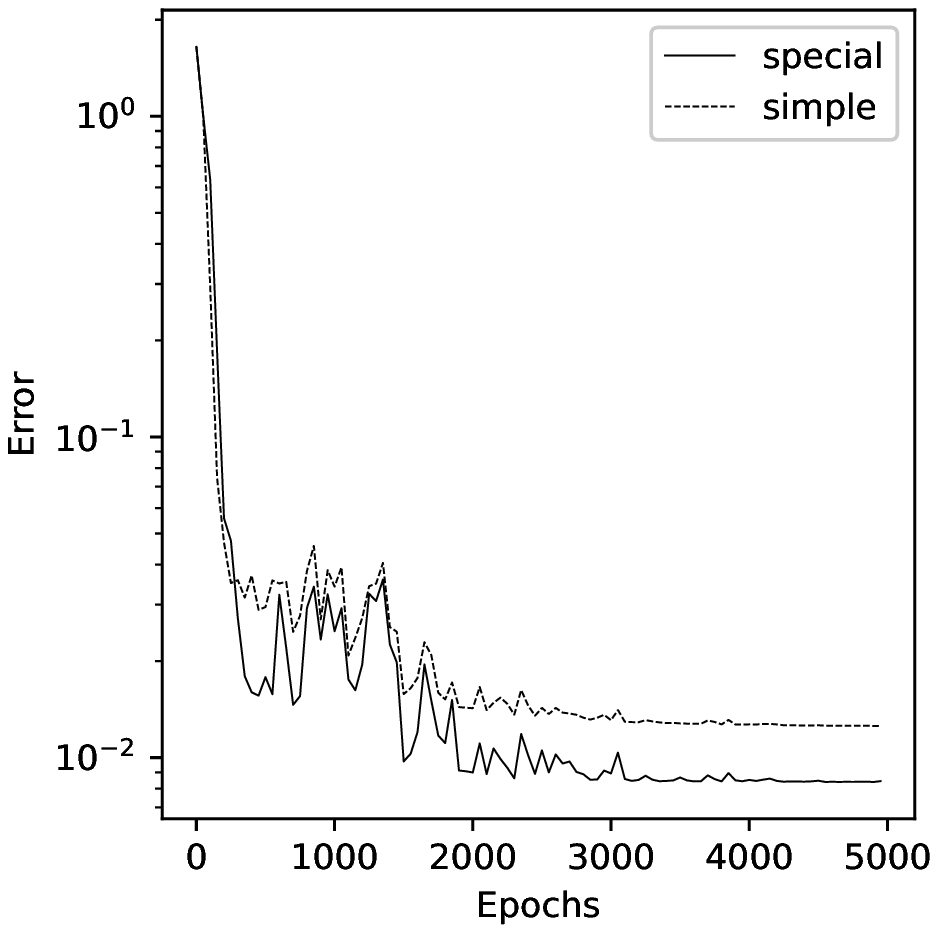}}
\subfloat[$s=0.7$]{
\includegraphics[scale=0.5]{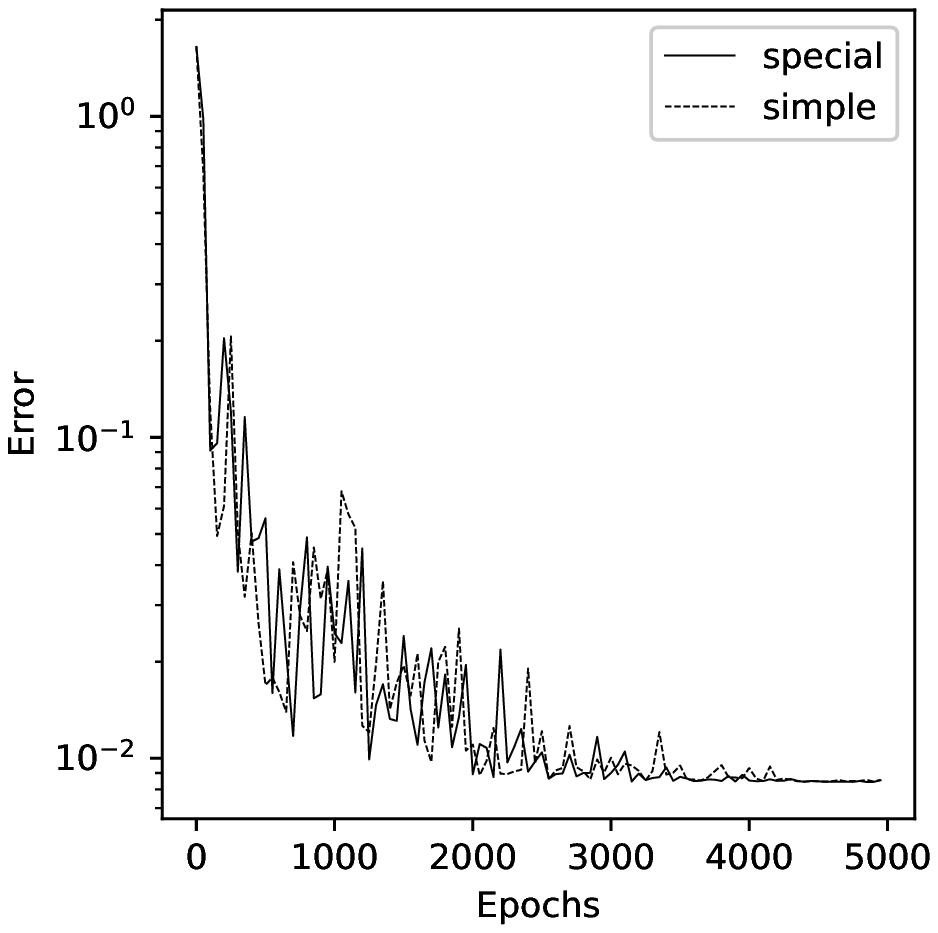}}
\subfloat[$s=0.9$]{
\includegraphics[scale=0.5]{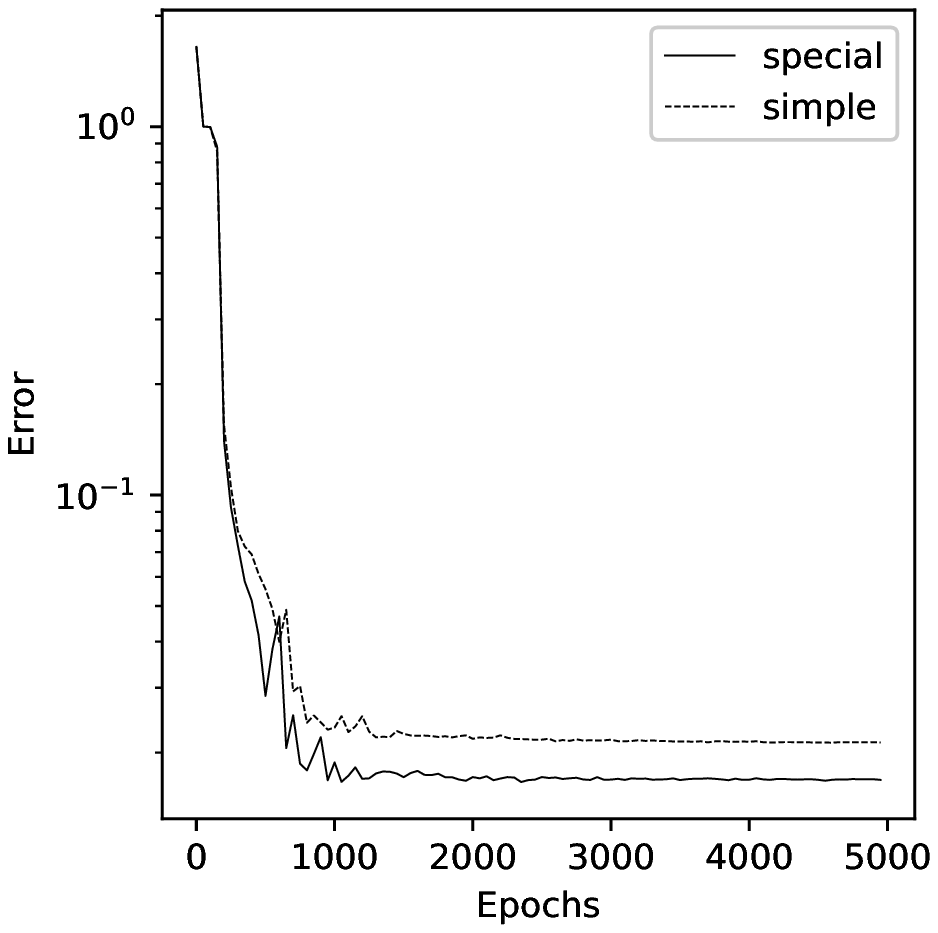}}
\caption{\em Errors $e_{\ell^2}(\mathcal{X})$ versus epochs for various $s$ using the special or simple structure when $d=3$.}
\label{Fig_case1_errors_d_3}
\end{figure}

\begin{table}
\centering
\begin{tabular}{|c|c|c|c|c|}
  \hline
   & \multicolumn{2}{c|}{Special structure in (\ref{05})} & \multicolumn{2}{c|}{Simple structure in (\ref{30})} \\\hline
  $s$ & Error & Running Time & Error & Running Time  \\\hline
  $0.1$ & $4.619\times10^{-2}$ & $8.7\times10^3$ & $5.402\times10^{-1}$ & $8.5\times10^3$\\\hline
  $0.3$ & $7.576\times10^{-3}$ & $9.9\times10^3$ & $1.800\times10^{-2}$ & $9.6\times10^3$\\\hline
  $0.5$ & $8.424\times10^{-3}$ & $1.3\times10^4$ & $1.257\times10^{-2}$ & $1.1\times10^4$\\\hline
  $0.7$ & $8.476\times10^{-3}$ & $1.8\times10^4$ & $8.498\times10^{-3}$ & $1.5\times10^4$\\\hline
  $0.9$ & $1.691\times10^{-2}$ & $2.5\times10^4$ & $2.134\times10^{-2}$ & $2.3\times10^4$\\\hline
\end{tabular}
\caption{\em Errors $e_{\ell^2}(\mathcal{X})$ and running time (seconds) for various $s$ using the special or simple structure when $d=3$.}
\label{Tab_case1_errors_d_3}
\end{table}

\begin{figure}
\centering
\includegraphics[scale=0.6]{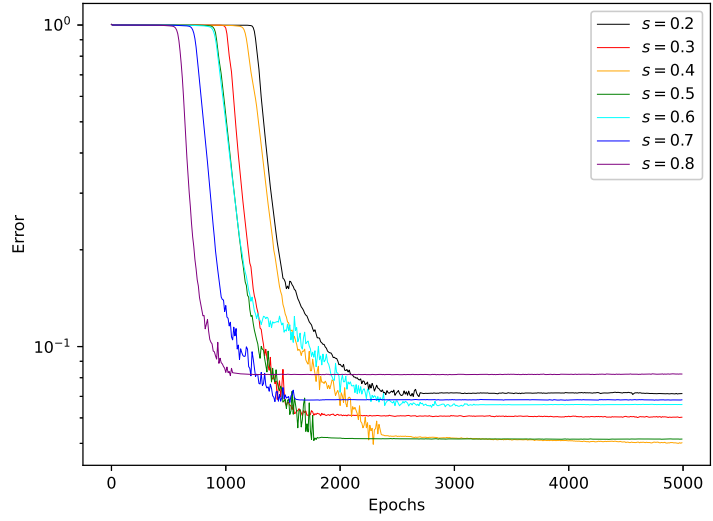}
\caption{\em Errors $e_{\ell^2}(\mathcal{X})$ versus epochs for various $s$ when $d=10$.}
\label{Fig_case1_errors_d_10}
\end{figure}

\begin{table}
\centering
\begin{tabular}{|c|c|c|}
  \hline
  $s$ & Error & Running Time\\\hline
  $0.2$ & $7.134\times10^{-2}$ & $2.4\times10^4$ \\\hline
  $0.3$ & $6.029\times10^{-2}$ & $2.5\times10^4$ \\\hline
  $0.4$ & $5.012\times10^{-2}$ & $2.7\times10^4$ \\\hline
  $0.5$ & $5.158\times10^{-2}$ & $2.9\times10^4$ \\\hline
  $0.6$ & $6.603\times10^{-2}$ & $3.4\times10^4$ \\\hline
  $0.7$ & $6.824\times10^{-2}$ & $4.1\times10^4$ \\\hline
  $0.8$ & $8.215\times10^{-2}$ & $5.6\times10^4$ \\\hline
\end{tabular}
\caption{\em Errors $e_{\ell^2}(\mathcal{X})$ and running time (seconds) for various $s$ when $d=10$.}
\label{Tab_case1_errors_d_10}
\end{table}

\section{Conclusion}
In summary, we develop a novel deep learning-based method to solve the spectral fractional Laplacian equation numerically. First, we reformulate the fractional equation as a regular partial differential equation of one more dimension by the Caffarelli-Silvestre extension. Next, we transform the extended problem to an equivalent minimal functional problem, and characterize the space of the weak solutions. To deal with the possible high dimensions, we employ FNNs to construct a special approximate class of the solution space, by which the approximate solution has consistent properties of the true solution. We then solve the minimization in the approximate class. In theory, we studied the approximation error of the special class under Barron-type hypothesis, and thereafter derive the solution error of this method. Finally, the effectiveness of the proposed deep Ritz method is illustrated in high-dimensional simulations.

In this work, we consider the error between the minimizer of the energy functional $\mathcal{I}$ in the approximation class and the true solution of the original problem. However, practically one needs to use numerical quadrature to compute $\mathcal{I}$, which brings extra errors. Consequently, future work may include the error analysis for the minimizer of the empirical loss function $\mathcal{I}_{\mathcal{T},\bar{h}}$ \eqref{10} discretized from $\mathcal{I}$ by the Monte Carlo method and sinc quadrature. More precisely, let $u^*$ be the true solution of \eqref{02} and
\begin{equation}
\hat{u}=\text{argmin}_{w\in\mathcal{N}}\mathcal{I}[w],\quad\hat{u}_{\mathcal{T},\bar{h}}=\text{argmin}_{w\in\mathcal{N}}\mathcal{I}_{\mathcal{T},\bar{h}}[w]
\end{equation}
for some DNN-based hypothesis space $\mathcal{N}$, then
\begin{equation}
\|u^*-\hat{u}_{\mathcal{T},\bar{h}}\|\leq\|u^*-\hat{u}\|+\|\hat{u}-\hat{u}_{\mathcal{T},\bar{h}}\|.
\end{equation}
Note that $\|u^*-\hat{u}\|$ has been investigated, it suffices to consider $\|\hat{u}-\hat{u}_{\mathcal{T},\bar{h}}\|$. Suppose $\mathcal{I}$ has a bounded inverse, then
\begin{multline}\label{31}
\|\hat{u}-\hat{u}_{\mathcal{T},\bar{h}}\|\leq\|\mathcal{I}^{-1}\|\left|\mathcal{I}[\hat{u}]-\mathcal{I}[\hat{u}_{\mathcal{T},\bar{h}}]\right|
\leq\|\mathcal{I}^{-1}\|\left(\mathcal{I}[\hat{u}_{\mathcal{T},\bar{h}}]-\mathcal{I}[\hat{u}]\right)\\
\leq\|\mathcal{I}^{-1}\|\left(\mathcal{I}[\hat{u}_{\mathcal{T},\bar{h}}]-\mathcal{I}[\hat{u}]+\mathcal{I}_{\mathcal{T},\bar{h}}[\hat{u}_{\mathcal{T},\bar{h}}]-\mathcal{I}_{\mathcal{T},\bar{h}}[\hat{u}_{\mathcal{T},\bar{h}}]\right)\\
\leq\|\mathcal{I}^{-1}\|\left[\left(\mathcal{I}[\hat{u}_{\mathcal{T},\bar{h}}]-\mathcal{I}_{\mathcal{T},\bar{h}}[\hat{u}_{\mathcal{T},\bar{h}}]\right)+\left(\mathcal{I}_{\mathcal{T},\bar{h}}[\hat{u}]-\mathcal{I}[\hat{u}]\right)\right],
\end{multline}
where we use the facts $\mathcal{I}[\hat{u}]\leq\mathcal{I}[\hat{u}_{\mathcal{T},\bar{h}}]$ and $\mathcal{I}_{\mathcal{T},\bar{h}}[\hat{u}_{\mathcal{T},\bar{h}}]\leq\mathcal{I}_{\mathcal{T},\bar{h}}[\hat{u}]$. On the right hand side of \eqref{31}, the first term  describes the generalization error of the empirical loss minimization over the hypothesis space, and the second term characterizes the bias coming from the numerical quadrature of the integrals. In \cite{Lu2021}, related analysis is conducted for Poisson equation and Schr\"{o}dinger equation using Rademacher complexity, but it is only for the Monte-Carlo quadrature. It is promising and challenging to do similar analysis for our method on fractional Laplacian equations, which involves both the stochastic Monte-Carlo method and the deterministic sinc quadrature for approximation.

\bibliography{expbib}
\bibliographystyle{plain}
\end{document}